\shorttitle{Coupled SDEs for skeletally decomposed superprocesses} 
\newcommand{\e}{\mathrm{e}}
  \def\d{{\textnormal d}}
\numberwithin{equation}{section}  
\begin{document}

\title{Skeletal stochastic differential equations\\ for superprocesses} 

\authorone[University of Exeter]{Dorottya Fekete} 
\authortwo[Universidad de Chile]{Joaquin Fontbona}
\authorthree[University of Bath]{Andreas E. Kyprianou}

\addressone{College of Engineering, Mathematics and Physical Sciences, Harrison Building, Streatham Campus, University of Exeter, North Park Road, Exeter EX4 4QF, UK. Email address: d.fekete@exeter.ac.uk} 
\addresstwo{Centre for Mathematical Modelling, DIM CMM, UMI 2807 UChile-CNRS, Universidad de Chile,  Santiago, Chile. Email address: fontbona@dim.uchile.cl} 
\addressthree{Department of Mathematical Sciences, University of Bath, Claverton Down, Bath, BA2 7AY, UK. Email address: a.kyprianou@bath.ac.uk}

\vspace{-2.5cm}
\begin{abstract}
It is well understood that a supercritical superprocess is equal in law to a discrete Markov branching process whose genealogy is dressed in a Poissonian way with immigration which initiates subcritial superprocesses. 
The Markov branching process corresponds to the genealogical description of {\it prolific individuals}, that is individuals who produce eternal genealogical lines of decent, and is often referred to as the {\it skeleton} or {\it backbone} of the original superprocess. 
The Poissonian dressing along the skeleton may be considered to be the remaining non-prolific genealogical mass in the superprocess.
Such skeletal decompositions are equally well understood for continuous-state branching processes (CSBP).

In a previous article, \cite{FFK17}, we developed an SDE approach to study the skeletal representation of CSBPs, which provided a common framework for the skeletal decompositions of supercritical and (sub)critical CSBPs.
It also helped us to understand how the skeleton thins down onto one infinite line of descent when conditioning on survival until larger and larger times, and eventually forever.  

Here our main motivation is to show the robustness of the SDE approach by expanding it to the spatial setting of superprocesses.
The current article only considers supercritical superprocesses, leaving the subcritical case open. 
\end{abstract}

\keywords{Superprocesses, stochastic differential equations, skeletal decomposition} 

\ams{60J80;60H30}{60G99} 

\section{Introduction}

In this paper we revisit the notion of the so-called skeletal decomposition of superprocesses.
It is well-known that when the survival probability is not 0 or 1, then non-trivial infinite genealogical lines of descent, which we call \textit{prolific}, can be identified on the event of survival.
By now it is also well understood that the process itself can be decomposed along its prolific genealogies, where non-prolific mass is immigrated in a Poissonian way along the stochastically `thinner' prolific skeleton.
This fundamental phenomenon was first studied by Evans and O'Connell \cite{EC94} for superprocesses with quadratic branching mechanism.
They showed that the distribution of the superprocess at time $t\geq 0$ can be written as the sum of two independent processes.
The first is a copy of the original process conditioned on extinction, while the second process is understood as the superposition of mass that has immigrated continuously along the trajectories of a dyadic branching particle diffusion, which is initiated from a Poisson number of particles.
This distributional decomposition was later extended to the spatially dependent case by Engl\"{a}nder and Pinsky \cite{EP99}.

A pathwise decomposition for superprocesses with general branching mechanism was provided by Berestycki et al. \cite{BKM11}.
Here the role of the skeleton is played by a branching particle diffusion that has the same motion generator as the superprocess, and the immigration is governed by three independent Poisson point processes.
The first one results in what we call continuous immigration along the skeleton, where the so-called excursion measure plays the central role, and it assigns zero initial mass to the immigration process.
The second point process discontinuously grafts independent copies of the original process conditioned on extinction on to the path of the skeleton.
Finally, additional copies of the original process conditioned on extinction are immigrated off the skeleton at its branch points, where the initial mass of the immigrant depends on the number of offspring at the branch point.
The spatially dependent version of this decomposition was considered in \cite{KPR14} and \cite{EKW15}.

Other examples of skeletal decompositions for superprocesses include \cite{SV99, EW03, KR12, MSP15, FPPP18}.
\medskip

In a previous article \cite{FFK17} we developed a stochastic differential equation (SDE) approach to study the skeletal decomposition of continuous state branching processes (CSBPs).
These decompositions were by no means new; prolific genealogies for both supercritical and subcritical CSBPs had been described, albeit in the latter case we have to be careful what we mean by `prolific'.
In particular, in \cite{BFM08}, \cite{DW07} and \cite{KR12} specifically CSBPs were considered, but since the total mass process of a superprocess with spatially independent branching mechanism is a CSBP, skeletal decompositions for CSBPs also appear as a special case of some of the previously mentioned results.

The results in \cite{FFK17} were motivated by the work of Duquesne and Winkel \cite{DW07}, and Duquesne and Le Gall \cite{DG05}.
Duquesne and Winkel,  in the context of L\'evy trees, provided a parametric family of decompositions for finite-mean supercritical CSBPs that satisfy Grey's condition.
They showed that one can find a decomposition of the CSBP for a whole family of embedded skeletons, where  the 'thinnest' one is the prolific skeleton with all the infinite genealogical lines of descent, while the other embedded skeletons not only contain the infinite genealogies, but also some finite ones grafted on to the prolific tree.
On the other hand, Duquesne and Le Gall studied subcritical CSBPs, and using the height process gave a description of those genealogies who survive until some fixed time $T>0$.
It is well known that a subcritical CSBP goes extinct almost surely, thus prolific individuals, in the classic sense, do not exist in the population. 
But since it is possible that the process survives until some fixed time $T$, individuals who have at least one descendent at time $T$ can be found with positive probability. 
We call these individuals $T$-prolific.

The SDE approach provides a common framework for the parametric family of decompositions of Duquesne and Winkel, as well as for the time-inhomogeneous decompositions we get, when we decompose the process along its $T$-prolific genealogies.
We note that these finite-horizon decompositions exist for both supercritical and subcritical process.
In the subcritical case the SDE representation can be used to observe the behaviour of the system when we condition on survival up to time $T$, then take $T$ to infinity.
Conditioning a subcritical CSBP to survive eternally results in what is known as a spine decomposition, where independent copies of the original process are grafted on to one infinite line of descent, that we call the spine (for more details, we refer the reader to \cite{RR89,L07,L08,FF12,AD12}).
And indeed, in \cite{FFK17} we see how the skeletal representation becomes, in the sense of weak convergence, a spinal decomposition when conditioning on survival, and in particular how the skeleton thins down to become the spine as $T\rightarrow\infty$.

\medskip

In this paper our objective is to demonstrate the robustness of this aforementioned method by expanding the SDE approach to the spatial setting of superprocesses.
We consider supercritical superprocesses with space dependent branching mechanism, but in future work we hope to extend results to the time-inhomogeneous case of subcritical processes.

The rest of this paper is organised as follows.
In the remainder of this section we introduce our model and fix some notation. 
Then in Section \ref{2} we remind the reader of some key existing results relevant to the subsequent exposition, in particular we recall the details of the skeletal decomposition of superprocesses with spatially dependent branching mechanism, as appeared in \cite{KPR14} and \cite{EKW15}.
The main result of the paper is stated in Section \ref{3}, where we reformulate the result of Section \ref{2} by writing down a coupled SDE, whose second coordinate corresponds to the skeletal process, while the first coordinate describes the evolution of the total mass in system.
In Sections \ref{4}, \ref{5} and \ref{6}, we give the proof of our results. 
\medskip

\noindent{\bf Superprocess.}
Let $E$ be a domain of $\mathbb{R}^d$ and denote by $\mathcal{M}( E)$ the space of finite Borel measures on $ E$.
Furthermore let $\mathcal{M}( E)^\circ:=\mathcal{M}( E)\setminus\{ 0\}$, where $0$ is the null measure.
We are interested in a strong Markov process $X$ on $ E$ taking values in $\mathcal{M}( E)$.
The process is characterised by two quantities $\mathcal{P}$ and $\psi$.
Here $(\mathcal{P}_t)_{t\geq 0}$ is the semigroup of an $\mathbb{R}^d$-valued diffusion killed on exiting $ E$, and $\psi$ is the so-called branching mechanism.
The latter takes the form
\begin{equation}\label{local}
\psi(x,z)=-\alpha(x)z+\beta(x)z^2+\int_{(0,\infty)} \left( \e^{-zu}-1+zu\right)m(x,\d u),\quad x\in E,\; z\geq 0,
\end{equation}
where $\alpha$ and $\beta\geq 0$ are bounded measurable mappings from $ E$ to $\mathbb{R}$ and $[0,\infty)$ respectively, and $(u\wedge u^2)m(x,\d u)$ is a bounded kernel from $ E$ to $(0,\infty)$.

For technical reasons we assume that $\mathcal{P}$ is a Feller semigroup whose generator takes the form 
\begin{equation}\label{L}
\mathcal{L}=\frac{1}{2}\sum_{i,j=1}^d a_{ij}(x)\frac{\partial^2}{\partial x_i\partial x_j}+\sum_{i=1}^d b_i(x)\frac{\partial}{\partial x_i},
\end{equation}
where $a:E\rightarrow \mathbb{R}^{d\times d}$ is the diffusion matrix that takes values in the set of symmetric, positive definite matrices, and $b:E\rightarrow \mathbb{R}^d$ is the drift term.%

Then the one-dimensional distributions of $X$ can be characterised as follows.
For all $\mu\in \mathcal{M}( E)$ and $f\in B^+( E)$, where $B^+(E)$ denotes the non-negative measurable functions on $E$, we have
\[
\mathbb{E}_\mu\left[\e^{-\langle f, X_t\rangle}\right]=\exp\left\lbrace -\langle u_f(\cdot,t),\mu\rangle\right\rbrace,
\]
where $u_f(x,t)$ is the unique non-negative, locally bounded solution to the integral equation
\begin{equation}\label{inteq}
u_f(x,t)=\mathcal{P}_t[f](x)-\int_0^t \d s\cdot \mathcal{P}_s[\psi(\cdot,u_f(\cdot,t-s))](x),\quad x\in E,\; t\geq 0.
\end{equation}
Here we use the notation
\[
\langle f,\mu \rangle=\int_{ E}f(x)\mu(\d x),\quad \mu\in \mathcal{M}( E),\; f\in B^+(E).
\]
For each $\mu\in \mathcal{M}( E)$ we denote by $\mathbb{P}_\mu$ the law of the process $X$ issued from $X_0=\mu$.
The process $(X,\mathbb{P}_\mu)$ is called a $(\mathcal{P},\psi)$-superprocess.

For more details on the above see Fitzsimmons \cite{F88}; for a general overview on superprocesses we refer the reader to the books of Dynkin \cite{D94, D02}, Etheridge \cite{E00}, Le Gall \cite{LG99} and Li \cite{ZL11}.

\bigskip

Next, we recall the SDE representation of $(X,\mathbb{P}_\mu)$ (for more details see Chapter 7 of \cite{ZL11}).
Recall that $m$ was previously defined in \eqref{local}. We assume that 
 it satisfies the integrability condition
\[
\sup_{x\in E}\int_{(0,\infty)}(u \wedge u^2)m(x,\d u)<\infty.
\]

Let $C_0( E)^+$ denote the space of non-negative continuous functions on $ E$ vanishing at infinity.
We assume that $\alpha$ and $\beta$ are continuous,
furthermore $x\mapsto(u \wedge u^2)m(x,\d u)$ is continuous in the sense of weak convergence, and 
\[
f \mapsto\int_{(0,\infty)}( u f(x)\wedge u^2 f(x)^2)m(x,\d u)
\]
maps $C_0( E)^+$ into itself.

Next define $\Delta X_s=X_s-X_{s-}$. As a random measure difference, if $s>0$ is such that $\Delta X_s\neq 0$, it can be shown that $\Delta X_s = u_s \delta_{x_s}$ for some $u_s\in(0,\infty)$ and $x_s\in E$.
Suppose that for the countable set of times, say  $(s_i, i\in\mathbb{N})$, that $\Delta X_{s_i}\neq 0$, $i\in\mathbb{N}$, we enumerate the pairs $((u_i,x_i), i\in\mathbb{N})$. We say that 
$N(\d s, \d x, \d u), s\geq 0$ is the  optional random measure on $[0,\infty)\times E\times(0,\infty)$, which can otherwise be identified as $\sum_{i\in\mathbb{N}} \delta_{(s_i,  x_i, u_i)} (\d s, \d x, \d u)$.
 Let $\hat{N}( \d s,\d x, \d u)$ denote the predictable compensator of $N(\d s,\d x, \d u)$.
It can be shown that $\hat{N}( \d s,\d x, \d u)=\d s K(X_{s-}, \d x, \d u)$, where, given $\mu\in\mathcal{M}(E)$, 
\[
K(\mu,\d x, \d u)= \mu(\d x)m(x,\d u), \qquad x\in E, u\in (0,\infty).
\]

If we denote the compensated measure by $\tilde{N}$, then for any $f\in D_0(\mathcal{L})$ (the set of functions in $C_0(E)$ that are also in the domain of $\mathcal{L}$) we have
\begin{equation}\label{zlsde}
\langle f,X_t\rangle=\langle f,X_0\rangle+M_t^c(f)+M_t^d(f)+\int_0^t \langle \mathcal{L}f+\alpha f, X_s\rangle\d s,\quad t\geq 0,
\end{equation}
where $t\mapsto M_t^c(f)$ is a continuous local martingale with quadratic variation $2\langle \beta f^2,  X_{t-}\rangle \d t$ and
\[
t\mapsto M_t^d(f)=\int_0^t\int_{E}\int_{(0,\infty)}\langle f, u\delta_x \rangle\tilde{N}( \d s,\d x, \d u), \quad t\geq 0,
\]
is a purely discontinuous local martingale. Here and throughout the paper, we prefer to write $\langle f, u\delta_x \rangle$ in place of $uf(x)$ as a reminder that it is the increment of the process $\langle f, X_t\rangle$, $t\geq 0$.

The representation \eqref{zlsde} is what we will use in Section \ref{3} when developing the SDE approach to the skeletal decomposition of $(X,\mathbb{P}_\mu)$.
However before we  proceed with this line of analysis, we first need to recall the details of this skeletal decomposition, as it not only motivates our results, but also proves to be helpful in understanding the structure of our SDE.

\section{Skeletal decomposition}\label{2}

Recall, that the main idea behind the skeletal decomposition is that under certain conditions we can identify prolific genealogies in the population, and by immigrating non-prolific mass along the trajectories of these prolific genealogies we can recover the law of the original superprocess.
The infinite genealogies are described by a Markov branching process whose initial state is given by a Poisson random measure, while traditionally the immigrants are independent copies of the original process conditioned to become extinct.

In this section we first characterise the two components, then explain how to construct the skeletal decomposition from these building blocks. 
The results of this section are lifted from \cite{KPR14} and \cite{EKW15}.

\medskip

As we have mentioned the skeleton is often constructed using the event of extinction, that is the event $\mathcal{E}_{{\rm fin}}=\{\langle 1, X_t\rangle=0\textrm{ for some }t>0\}$. 
This guides the skeleton particles into regions where the survival probability is high.
If we write $w(x)=-\log\mathbb{P}_{\delta_x}(\mathcal{E_{{\rm fin}}})$, and assume that $\mu\in\mathcal{M}(E)$ is such that $\langle w,\mu\rangle<\infty$, then it is not hard to see that 
$
\mathbb{P}_\mu(\mathcal{E}_{{\rm fin}})=\exp\left\lbrace  -\langle w, \mu\rangle \right\rbrace.
$
Furthermore, by conditioning $\mathcal{E}_{{\rm fin}}$ on $\mathcal{F}_t:=\sigma(X_s,s\leq t)$ we get that
\[
\mathbb{E}_\mu\left(\e^{-\langle w,X_t\rangle}\right)=\e^{-\langle w,\mu \rangle}.
\]
In \cite{EKW15} the authors point out that in order to construct a skeletal decomposition along those genealogies that avoid the behaviour specified by $w$ (in this case `extinction'), all we need is that the function $w$ gives rise to a multiplicative martingale $\left(\left(\e^{-\langle w,X_t\rangle}, t\geq 0\right),\mathbb{P}_\mu\right)$.
In particular, a skeletal decomposition is given for any choice of a martingale function $w$ which satisfies the following conditions.
 \begin{itemize}
 \item For all $x\in E$ we have $w(x)>0$ and $\sup_{x\in E} w(x)<\infty $, and
 \item $\mathbb{E}_\mu\left(\e^{-\langle w,X_t\rangle}\right)=\e^{-\langle w,\mu \rangle}$ for all $\mu\in\mathcal{M}_c(E),\; t\geq 0$. (Here $\mathcal{M}_c(E)$ denotes the set of finite, compactly supported measures on $E$).
 \end{itemize}
The condition $w(x)>0$ implicitly hides the notion of supercriticality, as it ensures that survival happens with positive probability.
Note however that `survival' can be interpreted in many different ways. 
For example, the choice of $\mathcal{E}_{\rm fin}$ results in skeleton particles that are simply part of some infinite genealogical line of descent, but we could also define surviving genealogies as those who visit a compact domain in $E$ infinitely often.

\begin{remark}\rm
The authors in \cite{KPR14} and \cite{EKW15} show the existence of the skeletal decomposition under a slightly more general setup, where $w$ is only locally bounded from above.  Note, however, that their proof consists of first establishing dealing with the case when $w$ is uniformly bounded, and then appealing to a localisation argument to relax this to the aforesaid local boundedness.  
Our SDE approach requires the case of uniform boundedness, however a localisation process can in principle be used to relax the assumption as in the aforementioned literature. 
\end{remark}

We will also make the additional assumption that $w$ is in the domain of the generator $\mathcal{L}$. This is predominantly because of the use of partial differential equations in our analysis rather than integral equations. 


\subsection{Skeleton}
First we identify the branching particle system that takes the role of the skeleton in the decomposition of the superprocess.
In general, a Markov branching process $Z=(Z_t,t\geq 0)$ takes values in $\mathcal{M}_a(E)$ (the set of finite, atomic measures in $E$), and it can be characterised by the pair $(\mathcal{P},F)$, where $\mathcal{P}$ is the semigroup of a diffusion and $F$ is the branching generator which takes the form
\[
F(x,s)=q(x)\sum_{n\geq 0}p_n(x)(s^n-s),\quad x\in E,\; s\in[0,1].
\]
Here $q$ is a bounded, measurable mapping from $E$ to $[0,\infty)$, and $\{ p_n(x),n\geq 0\}$, $x\in E$ are measurable sequences of probability distributions.
For $\nu\in\mathcal{M}_a(E)$ we denote the law of the process $Z$ issued from $\nu$ by $\mathbf{P}_\nu$.
Then we can describe $(Z,\mathbf{P}_\nu)$ as follows.
We start with initial state $Z_0=\nu$. 
Particles move according to $\mathcal{P}$, and at a spatially dependent rate $q(x)\d t$ a particle is killed and is replaced by $n$ offspring with probability $p_n(x)$.
The offspring particles then behave independently and according to the same law as their parent.

In order to specify the parameters of $Z$ we first need to introduce some notation.
Let $\xi=(\xi_t,t\geq 0)$ be the diffusion process on $E\cup \{\dagger\}$ (the one-point compactification of $E$ with a cemetery state) corresponding to $\mathcal{P}$, and let us denote its probabilities by $\{\Pi_x,x\in E\}$.
(Note that the previously defined martingale function $w$ can be extended to $E\cup \{\dagger\}$ by defining $w(\dagger)=0$).
Then for all $x\in E$
\[
\frac{w(\xi_t)}{w(x)}\exp\left\lbrace -\int_0^t\frac{\psi(\xi_s,w(\xi_s))}{w(\xi_s)}\d s\right\rbrace,\quad  t\geq 0,
\]
is a positive local martingale, and hence a supermartingale. 
(To see why this is true we refer the reader to the discussion in Section 2.1.1. of \cite{EKW15}).
Now let $\tau_E=\inf\{ t>0:\xi_t\in\{\dagger\}\}$, and consider the following change of measure
\begin{equation}\label{Piw}
\left.\frac{\d\Pi_x^w}{\d \Pi_x}\right|_{\sigma(\xi_s,s\in[0,t])}=\frac{w(\xi_t)}{w(x)}\exp\left\lbrace -\int_0^t \frac{\psi(\xi_s,w(\xi_s))}{w(\xi_s)}\d s\right\rbrace,\quad \textrm{on }\{t<\tau_E\},\;x\in E,
\end{equation}
which uniquely determines a family of (sub)probability measures $\{\Pi_x^w,x\in E\}$ (see for example \cite{EK86}).

If we denote by $\mathcal{P}^w$ the semigroup of the $E\cup\{\dagger\}$ valued process whose probabilities are $\{\Pi^w_x,x\in E\}$, then it can be shown that the generator corresponding to $\mathcal{P}^w$ is given by
\[
\mathcal{L}^w:=\mathcal{L}_0^w-w^{-1}\mathcal{L}w=\mathcal{L}_0^w-w^{-1}\psi(\cdot,w),
\]
where $\mathcal{L}_0^w u=w^{-1}\mathcal{L}(wu)$ whenever $u$ is in the domain of $\mathcal{L}$.
Note that $\mathcal{L}^w$ is also called an $h$-transform of the generator $\mathcal{L}$ with $h=w$.
The theory of $h$-transforms for measure-valued diffusions was developed in \cite{EP99}.

Intuitively if 
\begin{equation}\label{wx}
w(x)=-\log\mathbb{P}_{\delta_x}(\mathcal{E})
\end{equation}
defines a martingale function with the previously introduced conditions for some tail event $\mathcal{E}$, then the motion associated to $\mathcal{L}^w$ forces the particles to avoid the behaviour specified by $\mathcal{E}$.
In particular when $\mathcal{E}=\mathcal{E}_{{\rm fin}}$ then $\mathcal{P}^w$ encourages $\xi$ to visit domains where the global survival rate is high. 

Now we can characterise the skeleton process of $(X,\mathbb{P}_\mu)$ associated to $w$.
In particular, $Z=(Z_t,t\geq 0)$ is a Markov branching process with diffusion semigroup $\mathcal{P}^w$ and branching generator
\[
F(x,s)=q(x)\sum_{n\geq 0}p_n(x)(s^n-s),\quad x\in E,\; s\in[0,1],
\]
where
\begin{equation}\label{rate1}
q(x)=\psi'(x,w(x))-\frac{\psi(x,w(x))}{w(x)},
\end{equation}
and $p_0(x)=p_1(x)=0$, and for $n\geq 2$
\begin{equation}\label{pk}
p_n(x)=\frac{1}{w(x)q(x)}\left\lbrace \beta(x)w^2(x)\mathbf{1}_{\{ n=2\}}+w^n(x)\int_{(0,\infty)}\frac{y^n}{n!}\e^{-w(x)y}m(x,\d y)\right\rbrace.
\end{equation}
Here we used the notation
\[
\psi'(x,w(x)):=\left.{\partial_z}\psi(x,z)\right|_{z=w(x)},\quad x\in E.
\]
We refer to the process $Z$ as the $(\mathcal{P}^w, F)$ skeleton.


\subsection{Immigration}
Next we characterise the process that we immigrate along the previously introduced branching particle system.
To this end let us define the following function
\[
\psi^*(x,z)=\psi(x,z+w(x))-\psi(x,w(x)),\quad x\in E,
\]
which can be written as
\begin{equation}\label{psistar1}
\psi^*(x,z)=-\alpha^*(x)z+\beta(x)z^2+\int_{(0,\infty)}(\e^{-zu}-1+zu)m^*(x,\d u),\quad x\in E,
\end{equation}
where
\[
\alpha^*(x)=\alpha(x)-2\beta(x)w(x)-\int_{(0,\infty)}(1-\e^{-w(x)u})u\;m(x,\d u)=-\psi'(x,w(x)),
\]
and
\[
m^*(x,\d u)=\e^{-w(x)u}m(x,\d u).
\]
Note that under our assumptions $\psi^*$ is a branching mechanism of the form \eqref{local}.
We denote the probabilities of the $(\mathcal{P}, \psi^*)$-superprocess by $(\mathbb{P}^*_\mu)_{\mu\in\mathcal{M}(E)}$.

If $\mathcal{E}$ is the event associated with $w$ (see \eqref{wx}), and $\langle w,\mu\rangle<\infty$, then we have
\[
\mathbb{P}^*_{\mu}(\cdot)=\mathbb{P}_\mu(\cdot|\mathcal{E}).
\]
In particular, when $\mathcal{E}=\mathcal{E}_{\rm fin}$, then $\mathbb{P}^*_\mu$ is the law of the superprocess conditioned to become extinct.


\subsection{Skeletal path decomposition}
Here we give the precise construction of the skeletal decomposition that we introduced in a heuristic way at the beginning of this section.
Let $\mathbb{D}([0,\infty)\times \mathcal{M}(E))$ denote the space of measure valued c\`adl\`ag function.
Suppose that $\mu\in\mathcal{M}(E)$, and let $Z$ be a $(\mathcal{P}^w,F)$-Markov branching process with initial configuration consisting of a Poisson random field of particles in $E$ with intensity $w(x)\mu(\d x)$.
Next, dress the branches of the spatial tree that describes the trajectory of $Z$ in such a way that a particle at the space-time position $(x,t)\in E\times[0,\infty)$ has a $\mathbb{D}([0,\infty)\times \mathcal{M}(E))$-valued trajectory grafted on to it, say $\omega = (\omega_t, t\geq0)$, with rate
\begin{equation}\label{cdcimm}
2\beta(x)\d\mathbb{Q}_x^*(\d \omega)+\int_{(0,\infty)} y \e^{-w(x)y}m(x,\d y)\times \d\mathbb{P}^*_{y\delta_x}(\d \omega). 
\end{equation}
Here $\mathbb{Q}^*_x$ is the excursion measure on the space $\mathbb{D}([0,\infty)\times \mathcal{M}(E))$ which satisfies
\[
\mathbb{Q}^*_x\left(1-\e^{-\langle f, X_t\rangle}\right)=u^*_f(x,t)
\]
for $x\in E,\;t\geq 0$ and $f\in B^+_b(E)$ (the space of non-negative, bounded measurable functions on $E$),  where $u^*_f(x,t)$ is the unique solution to \eqref{inteq} with the branching mechanism $\psi$ replaced by $\psi^*$. (For more details on excursion measures see \cite{DK04}).
Moreover, when a particle in $Z$ dies and gives birth to $n\geq 2$ offspring at spatial position $x\in E$, with probability $\eta_n(x,\d y)\mathbb{P}^*_{y\delta_x}(\d \omega)$ an additional  $\mathbb{D}([0,\infty)\times \mathcal{M}(E))$-valued trajectory, $\omega$, is grafted on to the space-time branching point, where
\begin{equation}\label{etak}
\eta_n(x,\d y)=\frac{1}{w(x)q(x)p_n(x)}\left\lbrace \beta(x)w^2(x)\delta_0(\d y)\mathbf{1}_{\{ n=2\}}+w^n(x)\frac{y^n}{n!}\e^{-w(x)y}m(x,\d y)\right\rbrace.
\end{equation}
Overall, we have three different types of immigration processes that contribute to the dressing of the skeleton. In particular, the first term of \eqref{cdcimm} is what we call `continuous immigration' along the skeleton, while the second term is referred to as the `discontinuous immigration', and finally \eqref{etak} corresponds to the so-called `branch-point immigration'.

Now we define $\Lambda_t$ as the total mass from the dressing present at time $t$ together with the mass present at time $t$ of an independent copy of $(X,\mathbb{P}_\mu^*)$ issued at time $0$.
We denote the law of $(\Lambda, Z)$ by $\mathbf{P}_\mu$.
Then in \cite{KPR14} the authors showed that $(\Lambda,\mathbf{P}_\mu)$ is Markovian and has the same law to $(X,\mathbb{P}_\mu)$.
Furthermore, under $\mathbf{P}_\mu$, conditionally on $\Lambda_t$, the measure $Z_t$ is a Poisson random measure with intensity $w(x)\Lambda_t(\d x)$.

\section{SDE representation of the dressed tree}\label{3}

Recall that our main motivation is to reformulate the skeletal decomposition of superprocesses using the language of SDEs. 
Thus in this section, after giving an SDE representation of the skeletal process, we derive the coupled SDE for the dressed skeleton, which simultaneously describes the evolution of the skeleton and the total mass in the system. 


\subsection{SDE of the skeleton.}
We use the arguments on page 3 of \cite{JX13} to derive the SDE for the branching particle diffusion, that will act as the skeleton. 
Let $(\xi_t,t\geq 0)$ be the diffusion process corresponding to the Feller semigroup $\mathcal{P}$.
Since the generator of the motion is given by \eqref{L}, the process $\xi$ satisfies
\[
\d\xi_t=b(\xi_t)\d t+\sigma(\xi_t)\d B_t,
\]
where $\sigma:\mathbb{R}^d\rightarrow\mathbb{R}^d$ is such that $\sigma(x)\sigma^{\tt T}(x)=a(x)$ (where $\tt T$ denotes matrix transpose), and $(B_t,t\geq 0)$ is a $d$-dimensional Brownian motion (see for example Chapter 1 of \cite{P08}).

It is easy to verify that if $(\tilde{\xi}_t,t\geq 0)$ is the diffusion process under $\mathcal{P}^w$, then it satisfies
\[
\d\tilde{\xi}_t=\left( b(\tilde{\xi}_t)+\frac{\nabla w(\tilde{\xi}_t)}{w(\tilde{\xi}_t)}a(\tilde{\xi}_t) \right)\d t+\sigma(\tilde{\xi}_t)\d B_t,
\]
where $\nabla w$ is the gradient of $w$.
To simplify computations, define the function $\tilde{b}$ on $E$ given by
\[
\tilde{b}(x):=b(x)+\frac{\nabla w(x)}{w(x)}a(x).
\]

For $h\in C^2_b(E)$ (the space of bounded, twice differentiable continuous functions on $E$), using It\^{o}'s formula (see e.g. Section 8.3 of \cite{BO03}) we get
\[
\d h(\tilde{\xi}_t)=(\nabla h(\tilde{\xi}_t))^{\tt T}\tilde{b}(\tilde{\xi}_t)\d t+\frac{1}{2}\mathrm{Tr}\left[ \sigma^{\tt T}(\tilde{\xi}_t) H_{h}(\tilde{\xi}_t)\sigma(\tilde{\xi}_t)\right]\d t+(\nabla h(\tilde{\xi}_t))^{\tt T} \sigma(\tilde{\xi}_t)\d B_t,
\]
where $x^{\tt T}$ denotes the transpose of $x$, $\mathrm{Tr}$ is the trace operator, and $H_h$ is the Hessian of $h$ with respect to $x$, that is $H_h(x)_{i,j}=\frac{\partial^2}{\partial x_i\partial x_j}h(x)$.

Next, summing over all the particles alive at time $t$, the collection of which we denote by $\mathcal{I}_t$, gives
\begin{equation}\label{motion}
\d\langle h,Z_t\rangle=\left\langle \nabla h(\cdot)\cdot\tilde{b}(\cdot),Z_t\right\rangle\d t+\left\langle \frac{1}{2}\mathrm{Tr}\left[\sigma^{\tt T}(\cdot) H_{h}(\cdot)\sigma(\cdot)\right],Z_t\right\rangle\d t+\sum_{\alpha\in\mathcal{I}_t} (\nabla h(\xi_t^\alpha))^{\tt T} \sigma(\xi_t^\alpha)\d B_t^\alpha,
\end{equation}
where for each $\alpha$, $B^\alpha$ is an independent copy of $B$, and $\xi^\alpha$ is the current position of individual $\alpha\in\mathcal{I}_t$.

If an individual branches at time $t$ then we have
\begin{equation}\label{branching}
\langle h,Z_t-Z_{t-}\rangle=\sum_{\alpha:\textrm{death time of }\alpha =t}(k_\alpha-1)h(\xi_t^\alpha).
\end{equation}
Here $k_\alpha$ is the number of children of individual $\alpha$, which has distribution $\{p_k,k=0,1,\dots\}$.

Simple algebra shows that 
\[
\mathrm{Tr}\left[ \sigma^{\tt T}(x)H_h(x)\sigma(x)\right]=\sum_{ij}a_{ij}(x)\frac{\partial^2}{\partial x_i \partial x_j}h(x),
\]
thus by combining \eqref{motion} and \eqref{branching} we get
\begin{equation}\label{sdeskeleton}
\langle h,Z_t\rangle=\langle h,Z_0\rangle+\int_0^t\langle \mathcal{L}^w h,Z_s\rangle\d s+V_t^c+\int_0^t\int_{E}\int_{\mathbb{N}}\langle h,(k-1)\delta_x\rangle N^\dagger_s( \d s,\d x, \d\{ k\}),
\end{equation}
where $V_t^c$ is a continuous local martingale given by 
\begin{equation}\label{Vc}
V_t^c=\int_0^t \sum_{\alpha\in\mathcal{I}_s} (\nabla h(\xi_s^\alpha))^{\tt T} \sigma(\xi_s^\alpha)\d B_s^\alpha,
\end{equation}
and, $N^\dagger_s$ is an optional random measure on $[0,\infty)\times E\times \mathbb{N}$ with predictable compensator of the form $\hat{N}^\dagger(\d s, \d x, \d \{k\}) = \d s K^\dagger(Z_{s-}, \d x, \d \{k\})$ such that, for $\mu\in\mathcal{M}(E)$,
\begin{equation}
K^\dagger(\mu,\d x, \d \{k\})= \mu(\d x)  q(x)p_k(x) \#( \d\{k\})
\label{nastyrate}
\end{equation}
where $q$, $p_k(x)$ are given by \eqref{rate1}, \eqref{pk} and $\#$ is the counting measure.
 The reader will note that, for a (random) measure $M\in\mathcal{M}_a(E)$, we regularly interchange the notion of  $\sum_{k\in \mathbb{N}} \cdot$ with $\int_{\mathbb{N}}\cdot M(\d\{k\})$.

Note that from \eqref{Vc} it is easy to see that the quadratic variation of $V_t^c$ is 
\[
\langle V^c \rangle_t=\int_0^t \sum_{\alpha\in\mathcal{I}_s} (\nabla h(\xi_s^\alpha))^{\tt T} \sigma(\xi_s^\alpha)\sigma(\xi_s^\alpha)^{\tt T}\nabla h(\xi_s^\alpha)\d s=
 \int_0^t \langle( \nabla h)^{\tt T} a \nabla h,Z_s\rangle\d s.
\]

%
%

\subsection{Thinning of the SDE}
Now we will see how to modify the SDE given by \eqref{zlsde} in order to separate out the different types of immigration processes.
We use ideas developed in \cite{FFK17}.

Recall that the SDE describing the superprocess $(X,\mathbb{P}_\mu)$ takes the following form

\begin{align}\label{sdeloc}
\langle f,X_t\rangle&=\langle f,\mu\rangle+\int_0^t\langle \alpha f,X_s\rangle\d s +M_t^c(f)\notag\\
&\hspace{1cm}+\int_0^t\int_{E}\int_{(0,\infty)}\langle f, u \delta _x \rangle \tilde{N}( \d s,\d x, \d u)+\int_0^t \langle \mathcal{L}f,X_s \rangle\d s,\quad t\geq 0.
\end{align}

Here $M_t^c(f)$ is as in \eqref{zlsde}, and  $N( \d s, \d x, \d u)$ is an optional random measure on $[0,\infty)\times E\times (0,\infty)$ such that,  given $\mu\in\mathcal{M}(E)$, it has predictable compensator given by $\hat{N}( \d s, \d x, \d u)=\d s K(X_{s-}, \d x, \d u)$,where 
\[
K(\mu,  \d x, \d u) = \mu(\d x) m(x,\d u).
\]
Moreover, $\tilde{N}( \d s,\d x, \d u)$ is the associated compensated version of $N( \d s, \d x, \d u)$. 
Denote by $((s_i,x_i, u_i):i\in\mathbb{N})$ some enumeration of the atoms of $N( \d s,\d x, \d u)$.
Next we introduce independent marks to the atoms of $N$, that is we define the random measure
\[
\mathcal{N}( \d s,\d x, \d u,\d\{k\})=\sum_{i\in\mathbb{N}}\delta_{(s_i,x_i, u_i,k_i)}(\d s,\d x, \d u,\d\{k\}),
\]
whose predictable compensator $\d s\mathcal{K}(X_{s-},\d x, \d u,\d\{k\})$ has the property that, for  $\mu\in\mathcal{M}(E)$,
\begin{align*}
&
 \mathcal{K}(\mu,\d x, \d u,\d\{k\})
 = \mu(\d x) m(x,\d u)
  \frac{(w(x)u)^k}{k!}\e^{-w(x)u}\# (\d\{k\}).
\end{align*}
Now we can define three  random measures by
\[
N^0(\d s,\d x, \d u)=\mathcal{N}(\d s,\d x, \d u,\{k=0\}),
\]
\[
N^1(\d s,\d x, \d u)=\mathcal{N}(\d s,\d x, \d u,\{k=1\})
\]
and
\[
N^2(\d s,\d x, \d u)=\mathcal{N}(\d s,\d x, \d u,\{k\geq 2\}).
\]
Using Proposition 10.47 of \cite{J79} we see that $N^0$, $N^1$ and $N^2$ are also optional random measures and their compensators $\d s K^0(X_{s-},\d x, \d u)$,  $\d s K^1(X_{s-},\d x, \d u)$ and  $\d sK^2(X_{s-},\d x, \d u)$ satisfy
\[
 K^0(\mu,\d x, \d u)= \mu(\d x) \e^{-w(x)u}m(x,\d u),
\]
\[
K^1(\mu,\d x, \d u)=\mu(\d x) w(x)u\e^{-w(x)u}m(x,\d u), 
\]
and 
\[
 K^2(\mu,\d x, \d u)=\mu(\d x)\sum_{k=2}^\infty \frac{(w(x)u)^k}{k!}\e^{-w(x)u} m(x,\d u)
\]
for $\mu\in\mathcal{M}(E)$.
Using these processes we can rewrite \eqref{sdeloc}, so we get
\begin{align}
\langle f,X_t\rangle &=\langle f,\mu\rangle +\int_0^t\langle \alpha f,X_s\rangle\d s +M_t^c(f)+\int_0^t\int_E\int_{(0,\infty)}\langle f, u\delta _x \rangle\tilde{N}^0(\d s,\d x, \d u)+\int_0^t \langle \mathcal{L}f,X_s \rangle\d s\notag\\
&\hspace{1.4cm}+\int_0^t\int_E\int_{(0,\infty)}\langle f,u\delta_x \rangle N^1(\d s,\d x, \d u)+\int_0^t\int_E\int_{(0,\infty)}\langle f, u\delta_x \rangle N^2(\d s,\d x, \d u)\notag\\
&\hspace{1.4cm}-\int_0^t\left\langle \int_{(0,\infty)}uf(\cdot)\left(1-\e^{-uw(\cdot)} \right)m(\cdot,\d u),X_{s-}\right\rangle \d s\notag\\
&=\langle f,\mu\rangle -\int_0^t\langle \psi'(\cdot,w(\cdot,s)) f(\cdot),X_s\rangle\d s +M_t^c(f)+\int_0^t\int_E\int_{(0,\infty)}\langle f,u\delta_x\rangle\tilde{N}^0(\d s,\d x, \d u)\notag\\
&\hspace{1.4cm}+\int_0^t \langle \mathcal{L}f,X_s \rangle\d s+\int_0^t\int_E\int_{(0,\infty)}\langle f, u\delta_x \rangle N^1(\d s,\d x, \d u)\notag\\
&\hspace{1.4cm}+\int_0^t\int_E\int_{(0,\infty)}\langle f, u\delta_x \rangle N^2(\d s, \d x, \d u)+\int_0^t\langle 2\beta w f,X_{s-}\rangle\d s,
\label{thinning}
\end{align}
where we have used the fact that $\alpha(x)-\int_{(0,\infty)}(1-\e^{-w(x)u})u m(x,\d u)=-\psi'(x,w(x))+2\beta(x)w(x)$.
Recalling \eqref{psistar1} we see that the first line of the right-hand side of \eqref{thinning} corresponds to the dynamics of a $(\mathcal{P},\psi^*)$-superprocess.
Our aim now is to link the remaining three terms to the three types of immigration along the skeleton, and write down a system of SDEs that describe the skeleton and total mass simultaneously.
Heuristically speaking, this system of SDEs will consist of  \eqref{sdeskeleton} and a second SDE which looks a little bit like \eqref{thinning} (note, the latter has no dependency on the process $Z$ as things stand). To some extent, we can think of the SDE \eqref{thinning} as what one might see when `integrating out' \eqref{sdeskeleton} from  the aforesaid second SDE in the coupled system; indeed this will be one of our main conclusions.


\subsection{Coupled SDE}
Following the ideas of the previous sections we introduce the following driving sources of randomness that we will use in the construction of our coupled SDE. Our coupled system will describe the evolution of  the pair of random measures $(\Lambda, Z) = ((\Lambda_t, Z_t), t\geq 0)$ on $\mathcal{M}(E)\times\mathcal{M}_a(E)$.
\begin{itemize}
\item Let ${\texttt N}^0(\d s,\d x, \d u)$ be an optional random measure on $[0,\infty)\times E\times (0,\infty)$, which depends on $\Lambda$ with predictable compensator  $\hat{{\texttt N}}^0(\d s,\d x, \d u) = \d s\, \texttt{K}^0(\Lambda_{s-}, \d x, \d u)$, where, for $\mu\in \mathcal{M}(E)$, 
\[
\texttt{K}^0(\mu, \d x, \d u)= \mu(\d x) \e^{-w(x)u} m(x,\d u),
\]
and $\tilde{{\texttt N}}^{0}(\d s, \d x, \d u)$ is its compensated version;

\item let ${\texttt N}^1(\d s,\d x, \d u)$ be an optional random measure on $[0,\infty)\times E\times (0,\infty)$, dependent on $Z$, with predictable compensator $\hat{{\texttt N}}^1(\d s,\d x, \d u) = \d s\, \texttt{K}^1(Z_{s-}, \d x, \d u)$ so that, for $\mu\in\mathcal{M}_a(E)$,
\[
\texttt{K}^1(\mu, \d x, \d u)=   \mu(\d x) \e^{-w(x)u} m(x,\d u);
\]
\item  define ${\texttt N}^2(\d s,\d\rho, \d x, \d u)$ an optional random measure on $[0,\infty)\times \mathbb{N}\times E\times (0,\infty)$ also dependent on $Z$, with predictable compensator 
\[\hat{{\texttt N}}^2(\d s,\d\{k\},\d x, \d u)= \d s\, \texttt{K}^2(Z_{s-}, \d\{k\}, \d x, \d u)\] 
so that, for $\mu\in\mathcal{M}_a(E)$,
\[
\texttt{K}^2(\mu, \d\{k\}, \d x, \d u)=  \mu(\d x) q(x) p_k(x)\eta_k(x,\d u) \#(\d\{k\}),
\]
where $q$, $p_k(\d x)$ and $\eta_k(x,\d u)$ are given by \eqref{rate1}, \eqref{pk} and \eqref{etak}.
\end{itemize}

Now we can state our main result.

\begin{theorem}\label{thm}
Consider the following system of SDEs for $f,h\in D_0(\mathcal{L})$,
 {\rm \begin{align}
 \left(
\begin{array}{l}
\langle f,\Lambda_t\rangle\\
 \langle h,Z_t\rangle \\
\end{array}
\right)   =    & \, \left(
\begin{array}{l}
\langle f,\Lambda_0\rangle \\
\langle h, Z_0\rangle \\
\end{array}
\right) - \int_{0}^{t}
 \left( \begin{array}{l}
\langle \partial_z\psi^*(\cdot,0)f,
\Lambda_{s-} \rangle\\
 0 \\
\end{array}
\right) \d s
                      + \left( \begin{array}{l}
U_t^c(f) \\
 V_t^c(h) \\
\end{array}
\right)
  \notag\\
 &  + \int_{0}^{t}\int_{E}\int_{(0,\infty)} \left( \begin{array}{l}
\langle f,u\delta_x\rangle \\
 0 \\
\end{array}
\right)
  \tilde{{\texttt N}}^{0}(\d s, \d x, \d u) 
  + \int_{0}^{t}   \left( \begin{array}{l}
\langle \mathcal{L}f,\Lambda_{s-} \rangle \\
 \langle \mathcal{L}^w h,Z_{s-} \rangle \\
\end{array}
\right) \d s \notag \\
                      &\,   +\int_{0}^{t}\int_{E}\int_{(0,\infty)} \left( \begin{array}{l}
\langle f,u\delta_x\rangle \\
 0 \\
\end{array}
\right){\texttt N}^1(\d s,\d x, \d u)
\notag\\                     &\,  
      + \int_{0}^{t}\int_{\mathbb{N}}\int_{E}\int_{(0,\infty)} \left( \begin{array}{l}
 \langle f,u\delta_x\rangle \\
 \langle h,(k-1)\delta_x \rangle \\
\end{array}
\right) {\texttt N}^{2}(\d s,\d \{k\},\d x, \d u)
\notag\\                     &\,  
   +  \int_{0}^{t}   \left( \begin{array}{l}
\langle 2\beta f,Z_{s-}\rangle \\
 0 \\
\end{array}
\right) \d s, \qquad t\geq 0,              
                          \label{coupledSDE1}                          
    \end{align}}
inducing probabilities $\mathbf{P}_{(\mu,\nu)}$, $\mu\in\mathcal{M}(E)$, $\nu\in\mathcal{M}_a(E)$,    where $(U_t^c(f),t\geq 0)$ is a continuous local martingale with quadratic variation $2 \langle\beta f^2,\Lambda_{t-}\rangle\d t$, and $(V_t^c(h),t\geq 0)$ is a continuous local martingale with quadratic variation $\langle (\nabla h)^{\tt T} a\nabla h,Z_{t-}\rangle\d t$. (Note $\partial_z\psi^*(x,0) = \psi'(x,w(x))$ is another way of identifying the drift term in the first integral above). With an immediate abuse of  notation, write $\mathbf{P}_\mu = \mathbf{P}_{(\mu, {\rm Po}(w\mu))}$, where ${\rm Po}(w\mu)$ is an independent Poisson random measure on $E$ with intensity $w\mu$. 
Then we have the following:
\begin{enumerate} 
\item[(i)] There exists a unique weak solution to the SDE \eqref{coupledSDE1} under each $\mathbf{P}_{(\mu,\nu)}$;
\item[(ii)] Under each $\mathbf{P}_\mu$, for $t\geq 0$, conditional on $\mathcal{F}_t^\Lambda=\sigma(\Lambda_s,s\leq t)$,  $Z_t$  is a Poisson random measure with intensity $w(x)\Lambda_t(\d x)$;
\item[(iii)] the process $(\Lambda_t,t\geq 0)$, with probabilities $\mathbf{P}_{\mu}$, $\mu\in\mathcal{M}(E)$, is Markovian and a weak solution to \eqref{sdeloc}.
\end{enumerate}
\end{theorem}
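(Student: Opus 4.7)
The plan is to (1) build a weak solution to \eqref{coupledSDE1} from the pathwise skeletal construction of \cite{KPR14, EKW15}, (2) establish weak uniqueness via a joint Laplace-functional analysis, and (3) deduce (ii) and (iii) from these two by noting that the pathwise construction automatically satisfies the conditional Poisson property, and then integrating $Z$ out of the coupled dynamics.

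For existence in (i), I would take the pathwise object $(\Lambda, Z)$ described in Section \ref{2}: $Z$ is the $(\mathcal{P}^w, F)$-Markov branching process started from an independent Poisson random measure of intensity $w\mu$, and $\Lambda$ is the sum of an independent $(\mathcal{P}, \psi^*)$-superprocess issued from $\mu$ together with the three immigration fields \eqref{cdcimm}--\eqref{etak} dressing $Z$. The $Z$-coordinate then satisfies the second row of \eqref{coupledSDE1} verbatim by \eqref{sdeskeleton}. For the $\Lambda$-coordinate, I would combine the SDE \eqref{zlsde} applied with $\psi^*$ in place of $\psi$ to the underlying $\psi^*$-superprocess (which via \eqref{psistar1} produces the $-\partial_z \psi^*(\cdot,0)$ drift, the $U^c_t(f)$ Brownian piece, the $\tilde{\texttt{N}}^0$ jumps and the $\mathcal{L}f$ drift) with the dynamics injected by each immigration layer off $Z$: continuous immigration at rate $2\beta(x)$ contributes the $\langle 2\beta f, Z_{s-}\rangle\,\d s$ drift; discontinuous immigration of mass $y$ at spatial rate $y\e^{-w(x)y}m(x,\d y)$ off $Z$ generates jumps whose compensator is precisely $\texttt{K}^1(Z_{s-}, \d x, \d u)$; branch-point immigration governed by $\eta_k$ generates jumps with compensator $\texttt{K}^2(Z_{s-}, \d\{k\}, \d x, \d u)$. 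Matching compensators then confirms that $(\Lambda, Z)$ is a weak solution.

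For weak uniqueness in (i), I would pass to the joint Laplace functional $v_{f,\phi}(t,\mu,\nu) := \mathbf{E}_{(\mu,\nu)}[\exp(-\langle f,\Lambda_t\rangle - \langle \phi, Z_t\rangle)]$ for non-negative test functions $f, \phi$. Applying It\^o to $\exp(-\langle f,\Lambda_t\rangle - \langle \phi, Z_t\rangle)$ against any weak solution of \eqref{coupledSDE1} and computing the predictable compensators term by term, the ansatz $v_{f,\phi}(t,\mu,\nu) = \exp(-\langle U_t, \mu\rangle - \langle \Phi_t, \nu\rangle)$ reduces the dynamics to a coupled semilinear PDE system for $(U_t, \Phi_t)$ with initial data $(f,\phi)$, whose non-linearities involve $\psi^*$, $F$ and cross terms generated by the $\texttt{K}^1$ and $\texttt{K}^2$ compensators. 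Standard Picard iteration, using the uniform boundedness of $w, \alpha, \beta$ and the integrability of $m$, yields a unique non-negative locally bounded solution $(U_t, \Phi_t)$. This gives uniqueness of one-dimensional marginals and, via the Markov structure inherited from the SDE coefficients, uniqueness in law of the coupled process. For (ii), the pathwise construction above satisfies the conditional Poisson property of $Z_t$ given $\mathcal{F}_t^\Lambda$ by \cite{KPR14}; since weak uniqueness holds, this property is inherited by the unique law of any weak solution under $\mathbf{P}_\mu$. For (iii), conditioning on $\mathcal{F}_\infty^\Lambda$ and averaging $Z$ out of the first coordinate of \eqref{coupledSDE1} using (ii), the $\texttt{N}^1$ compensator $Z_{s-}(\d x)\e^{-w(x)u}m(x,\d u)$ averages to $w(x)\Lambda_{s-}(\d x)\e^{-w(x)u}m(x,\d u) = K^1(\Lambda_{s-}, \d x, \d u)$; the $\texttt{N}^2$ compensator averages to $K^2(\Lambda_{s-}, \d x, \d u)$ after using \eqref{pk}, \eqref{etak} and summing over $k\geq 2$ (the $\delta_0$ part of $\eta_2$ contributing nothing to $\Lambda$); the drift $\langle 2\beta f, Z_{s-}\rangle$ averages to $\langle 2\beta w f, \Lambda_{s-}\rangle$. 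Combined with $\texttt{K}^0$ (already $\Lambda$-measurable) and the drift $-\partial_z\psi^*(\cdot, 0)f = -\psi'(\cdot, w(\cdot))f$, we recover exactly \eqref{thinning}, which was shown to be equivalent to \eqref{sdeloc}, and the Markov property of $\Lambda$ follows from weak uniqueness for \eqref{sdeloc}.

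I expect the main obstacle to be the weak-uniqueness step: because the compensator of $\texttt{N}^0$ depends on $\Lambda$ while those of $\texttt{N}^1$ and $\texttt{N}^2$ depend on $Z$, the coupled Laplace PDE does not decouple and must be analysed as a genuinely two-dimensional fixed-point problem. Establishing well-posedness in a class broad enough to contain all candidate weak solutions, and then verifying that the exponential Laplace ansatz is indeed the unique such functional, is where the technical work concentrates; the passage from (i)+(ii) to (iii) is then essentially algebraic, driven by the compensator identities \eqref{pk}, \eqref{etak} together with the consistency relation $\alpha - \int (1-\e^{-wu})u\, m(\cdot, \d u) = -\psi'(\cdot, w) + 2\beta w$ already exploited in \eqref{thinning}.
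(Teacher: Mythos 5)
Your strategy matches the paper's for existence (the pathwise dressed tree of Section \ref{2} verified against the coupled SDE by matching compensators, exactly as in Section \ref{4}) and, at its core, for uniqueness: the paper also identifies the joint Laplace functional of any weak solution by an It\^o computation against time-dependent test functions, reducing matters to the coupled PDE system \eqref{fT}--\eqref{hT}, which is your $(U_t,\Phi_t)$ system after time reversal. Where you diverge is in how the two remaining ingredients are obtained. First, the paper does not prove well-posedness of the coupled PDE by Picard iteration; it identifies the solutions probabilistically, via the integral equation \eqref{inteqpsistar} for the $\psi^*$-superprocess and the integral equation \eqref{inteq2} for the dressed tree (an adaptation of Theorem 2 of \cite{KPR14}), and then shows equivalence of the integral and differential forms. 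This matters because the nonlinearity in the $\Phi$-equation is $\psi^*(x,-w\e^{-\Phi}+u^*_f)-\psi^*(x,u^*_f)$ together with $\mathcal{L}(w\e^{-\Phi})$, so the argument of $\psi^*$ can be negative and the natural well-posedness class is $\e^{-\Phi}\in[0,1]$; ``standard Picard iteration'' is not automatic here and is precisely the technical content you defer. Second, for (ii) and (iii) the paper does not cite \cite{KPR14} nor integrate $Z$ out of the compensators; it proves a second constant-expectation identity for $G_t^T=\e^{-\langle f^T+w(1-\e^{-h^T}),X_t\rangle}$ and deduces the single identity \eqref{law}, from which (ii), (iii) and uniqueness all follow by choosing $h=0$ or $h\neq 0$. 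This keeps the whole argument internal to the SDE framework, which is the stated purpose of the paper, whereas your route imports the conditional Poisson property from the earlier skeletal-decomposition literature and then transfers it by weak uniqueness.

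Two soft spots to be aware of if you pursue your version. The ``integrating out'' step for (iii) is exactly the heuristic the paper flags after \eqref{thinning} but deliberately does not use as a proof: projecting the drift $\langle 2\beta f,Z_{s-}\rangle\,\d s$ and the $\texttt{K}^1$, $\texttt{K}^2$ compensators onto $\mathcal{F}^\Lambda$ requires the dual predictable projection of the full semimartingale characteristics onto the smaller filtration, and one must then still argue that a process with these projected characteristics is a weak solution of \eqref{sdeloc} (equivalently, solves the superprocess martingale problem); this is more than algebra. Relatedly, your (iii) uses (ii), and your (ii) uses weak uniqueness, so the logical order must be uniqueness first; the paper's single identity \eqref{law} avoids this circularity management by delivering everything at once.
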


The rest of the paper is dedicated to the proof of this theorem, which we  split  over several subsections.

\section{Proof of Theorem \ref{thm} (i): existence}\label{(i)existence}\label{4} 
Consider the pair $(\Lambda, Z)$, where $Z$ is a $(\mathcal{P}^w,F)$ branching Markov process with $Z_0=\nu$ for some $\nu\in\mathcal{M}_a(E)$, and whose jumps are coded by the coordinates of the random measure $\mathtt{N}^2$. 
Furthermore we define $\Lambda_t=X^*_t+D_t$, where $X^*$ is an independent copy of the $(\mathcal{P},\psi^*)$-superprocess with initial value $X^*_0=\mu$, $\mu\in\mathcal{M}(E)$, and the process $(D_t,t\geq 0)$ is described by 
\begin{align}
\langle f,D_t\rangle =& \int_0^t\int_{\mathbb{D}([0,\infty)\times \mathcal{M}(E))}\langle f,\omega_{t-s}\rangle \mathtt{N}^1(\d s,\cdot, \cdot,\d\omega)\notag\\
&\hspace{1cm}+\int_0^t\int_{\mathbb{D}([0,\infty)\times \mathcal{M}(E))}\langle f,\omega_{t-s}\rangle \mathtt{N}^2(\d s,\cdot,\cdot, \cdot,\d\omega)\notag\\
&\hspace{2cm}+\int_0^t\int_{\mathbb{D}([0,\infty)\times \mathcal{M}(E))}\langle f,\omega_{t-s}\rangle \mathtt{N}^*(\d s,\d \omega),
\label{D}
\end{align}
where $f\in D_0(\mathcal{L})$ and with a slight abuse of the notation that was introduced preceding Theorem \ref{thm},
\begin{itemize}
\item $\mathtt{N}^1$ is an optional random measure on $[0,\infty)\times E \times(0,\infty)\times\mathbb{D}([0,\infty)\times \mathcal{M}(E))$ with predictable compensator
\[
\hat{\mathtt{N}}^1(\d s,\d x, \d u,\d\omega)=\d s Z_{s-}(\d x)u\e^{-w(x)u}m(x,\d u)\mathbb{P}_{u\delta_x}^*(\d\omega),
\]
\item $\mathtt{N}^2$ is an optional random measure on $[0,\infty)\times \mathbb{N}\times E \times(0,\infty)\times \mathbb{D}([0,\infty)\times \mathcal{M}(E))$ with predictable compensator
\[
\hat{\mathtt{N}}^2(\d s,\d\{k\},\d x,\d u, \d\omega)=\d s Z_{s-}(\d x)q(x)p_k(x)\eta_k(x,\d u)\mathbb{P}_{u\delta_x}^*(\d\omega)\#(\d \{k\}),
\]
\item $\mathtt{N}^*$ is an optional random measure on $[0,\infty)\times \mathbb{D}([0,\infty)\times \mathcal{M}(E))$ with predictable compensator
\begin{equation}
\hat{\mathtt{N}}^*(\d s,\d\omega)=\d s\int_E 2\beta(x) Z_{s-}(\d x)\mathbb{Q}^*_{x}(\d\omega),
\label{Qcomp}
\end{equation}
where 
\[
\mathbb{Q}_x^*(1-\e^{-\langle f,\omega_t\rangle})=-\log\mathbb{E}_{\delta_x}^*(\e^{-\langle f, X_t\rangle})=u_f^*(x,t).
\]
\end{itemize}
Note, we have used $\cdot$ to denote marginalisation so, e.g. 
\[
\mathtt{N}^1(\d s,\cdot, \cdot,\d\omega) = \int_E\int_{(0,\infty)}\mathtt{N}^1(\d s,\d x, \d u,\d\omega)
\]
and, to be consistent with previous notation, we also have e.g.
\[
\mathtt{N}^2(\d s,\d\{k\},\d x,\d u, \cdot) = \mathtt{N}^2(\d s,\d\{k\},\d x,\d u).
\]

We claim that the pair $(\Lambda, Z)$ as defined above solves the coupled system of SDEs \eqref{coupledSDE1}.
To see why, start by noting that $Z$ solves the second coordinate component of  \eqref{coupledSDE1} by definition of it being a spatial branching process; cf. \eqref{sdeskeleton}.  In dealing with the term $\d D_t$, we first note that the random measures $ \mathtt{N}^1$ and $ \mathtt{N}^2$ have finite activity through time, whereas $ \mathtt{N}^*$ has infinite activity. Suppose we write $\mathtt{I}^{(i)}_t$, $i =1,2,3$, for the three integrals on the right-hand side of \eqref{D}, respectively.   Taking the case of $\mathtt{I}^{(1)}_t$,  if $t$ is a jump time of $ \mathtt{N}^1(\d t, \d x, \d u, \d \omega)$, then 
$
\Delta \mathtt{I}^{(1)}_t = \langle f, \omega_0 \rangle ,
$
noting in particular that $ \omega_0 = u\delta_x$. A similar argument produces $\Delta \mathtt{I}^{(2)}_t = \langle f, \omega_0 \rangle = \langle f, u\delta_x\rangle $, when $t$ is a jump time of $ \mathtt{N}^2(\d t ,\d \{k\}, \d x, \d u, \d\omega)$. In contrast, on account of the excursion measures $(\mathbb{Q}^*_{x}, x\in E)$ having the property that $\mathbb{Q}^*_{x}(\omega_0 >0) =0$, we have $\Delta \mathtt{I}^{(3)}_t =0$.  Nonetheless, the structure of the compensator \eqref{Qcomp} implies that  there is a rate of arrival (of these zero contributions) given by
\begin{align*}
\int_{\mathbb{D}([0,\infty)\times \mathcal{M}(E))}\langle f, \omega_0\rangle \hat{\mathtt{N}}^*(\d s, \d\omega)  &=\d s\int_E 2\beta(x) Z_{s-}(\d x)\mathbb{Q}^*_{x}(\langle f, \omega_0\rangle) \\
&= \d s\int_E 2\beta(x) Z_{s-}(\d x)f(x),
\end{align*}
where we have used the fact that $\mathbb{E}^*_{\delta_x}[\langle f, X_t\rangle] = \mathbb{Q}^*_{x}(\langle f, \omega_t \rangle)$, $t\geq 0$; see e.g. \cite{DK04}.

Now suppose that $t$ is not a jump time of $\mathtt{I}^{(i)}_t$, $i =1,2,3$. In that case, we note that $\langle f, \Lambda_{t}\rangle = \langle f,  X^*_{t}  \rangle + \langle f,  D_{t}  \rangle $ is nothing more than the aggregation of mass that has historically immigrated and evolved under $\mathbb{P}^*$. As such (comparing with e.g. \eqref{zlsde})
\begin{align}
\d\langle f, \Lambda_t\rangle =& -\langle\partial_z\psi^*(\cdot,0)f,     \Lambda_{t-}\rangle  \d t + \d U_t^c(f) + \d U_t^d(f)+ \langle\mathcal{L}f,\Lambda_{t-} \rangle\d t \notag\\
&
+\int_E\int_{(0,\infty)}
\langle f, u\delta_x \rangle \mathtt{N}^1(\d t,\d x,\d u)\notag\\
&\hspace{1cm}+\int_{\mathbb{N}}\int_E\int_{(0,\infty)}
\langle f, u\delta_x\rangle \mathtt{N}^2(\d t,\d\{k\}, \d x, \d u)\notag\\
&\hspace{2cm}+
\langle 2\beta f,Z_{t-} \rangle\d t, \qquad t\geq 0,
\label{existsde}
\end{align}
where 
\[
U_t^d(f) = \int_0^t\int_{E}\int_{(0,\infty)}\langle f, u\delta_x\rangle\tilde{N}^0_s( \d s,\d x, \d u), \quad t\geq 0,
\]
and $U^c(f)$ was defined immediately above Theorem  \ref{thm}. As such, we see from \eqref{existsde} that the pair $(\Lambda,Z)$ defined in this section provides a solution to \eqref{coupledSDE1}.

\section{Some integral and differential equations}\label{5}
The key part of our reasoning in proving parts (ii) and (iii) of Theorem \ref{thm} will be to show that
\begin{equation}\label{law}
\mathbf{E}_\mu\left[\e^{-\langle f,\Lambda_t\rangle-\langle h,Z_t\rangle}\right]=\mathbb{E}_\mu\left[ \e^{-\langle f+w (1-\e^{-h}),X_t\rangle}\right],
\end{equation}
where $X$ satisfies \eqref{sdeloc}.
Moreover, the key idea behind  the proof of \eqref{law} is to fix $T>0$ and $f,h\in D_0(\mathcal{L})$, and choose time-dependent test functions $f^T$ and $h^T$ in a way that the processes
\begin{equation}
F_t^T=\e^{-\langle f^T(\cdot,T-t),\Lambda_t\rangle-\langle h^T(\cdot,T-t),Z_t\rangle},\qquad t\in[0,T],
\label{mg1}
\end{equation}
and 
\begin{equation}
G_t^T= \e^{-\langle f^T(\cdot,T-t)+w (1-\e^{-h^T(\cdot,T-t)}),X_t\rangle},\qquad t\in[0,T],
\label{mg2}
\end{equation}
have constant expectations on $[0,T]$.
The test functions  are defined as solutions to some partial differential equations with final value conditions  $f^T(x,T)=f(x)$ and $h^T(x,T)=h(x)$.
This, together with the fact that $\Lambda_0=X_0=\mu$, and that $Z_0$ is a Poisson random measure with intensity $w(x)\Lambda_0(\d x)$, then will give us \eqref{law}.

Thus to prove \eqref{law} and hence parts (ii) and (iii) of Theorem \ref{thm}, we need the existence of solutions of two differential equations. 
Recall from Section \ref{2} that in the skeletal decomposition of superprocesses the total mass present at time $t$ has two main components.
The first one corresponds to an initial burst of subcritical mass, which is an independent copy of $(X,\mathbb{P}^*_\mu)$, and the second one is the accumulated mass from the dressing of the skeleton.
As we will see in the next two results below, one can associate the first differential equation, that is the equation defining $f^T$, to $(X,\mathbb{P}^*_\mu)$, while the equation defining $h^T$ has an intimate relation to the dressed tree defined in the previous section.


\begin{lemma}\label{flemma}
Fix $T>0$, and let $f\in D_0(\mathcal{L})$. Then the following differential equation has a  unique non-negative solution
\begin{align}
\frac{\partial}{\partial t}f^T(x,t)&=-\mathcal{L}f^T(x,t)+\psi^*(x,f^T(x,t)), \quad 0\leq t\leq T\label{fT},\\
f^T(x,T)&=f(x),\nonumber
\end{align}
where $\psi^*$ is given by \eqref{psistar1}.
\end{lemma}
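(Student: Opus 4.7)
The plan is to reduce the terminal value problem to a known integral equation of branching-mechanism type and then invoke the standard existence/uniqueness theory for semilinear evolution equations associated with superprocesses.

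First I would perform the time reversal $u(x,s):=f^T(x,T-s)$, $s\in[0,T]$, which converts \eqref{fT} into the forward Cauchy problem
\begin{equation*}
\frac{\partial}{\partial s}u(x,s)=\mathcal{L}u(x,s)-\psi^*(x,u(x,s)),\qquad u(x,0)=f(x).
\end{equation*}
Applying Duhamel's principle with the Feller semigroup $\mathcal{P}$ produces the mild formulation
\begin{equation*}
u(x,s)=\mathcal{P}_s[f](x)-\int_0^s \mathcal{P}_r[\psi^*(\cdot,u(\cdot,s-r))](x)\,\d r,\qquad x\in E,\ 0\leq s\leq T,
\end{equation*}
which is precisely the integral equation \eqref{inteq} with the branching mechanism $\psi$ replaced by $\psi^*$.

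Because $\psi^*$ satisfies the structural assumptions \eqref{local} imposed on a branching mechanism (as checked in \eqref{psistar1}, using that $w$ is bounded and strictly positive so that $\alpha^*$ is bounded and $m^*$ inherits the integrability condition on $m$), the classical existence and uniqueness theory for the log-Laplace functional of a $(\mathcal{P},\psi^*)$-superprocess, e.g.\ Fitzsimmons \cite{F88} or Chapters 2 and 7 of Li \cite{ZL11}, applies verbatim. This yields a unique non-negative, locally bounded solution $u$ to the mild equation, which gives a unique non-negative $f^T(x,t)=u(x,T-t)$ in the mild sense. Non-negativity here is automatic because $u$ is the $-\log$ of a Laplace functional.

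Finally, I would upgrade the mild solution to a classical solution of \eqref{fT}. Since $f\in D_0(\mathcal{L})$ and $\psi^*(x,\cdot)$ is smooth on $[0,\infty)$ with derivatives controlled by the bounded coefficients $\alpha^*,\beta,m^*$, a routine bootstrap using $\mathcal{P}_s f\in D_0(\mathcal{L})$ and the continuity in $s$ of the Duhamel integrand shows that $u(\cdot,s)\in D_0(\mathcal{L})$ for all $s\in[0,T]$ and that $\partial_s u=\mathcal{L}u-\psi^*(\cdot,u)$ in the strong sense; reversing time returns \eqref{fT}. The main technical point is this regularity upgrade, since the nonlinear term $\psi^*(\cdot,u)$ must be shown to lie in the domain of $\mathcal{L}$ along the trajectory; this is where the hypotheses that $\alpha$ and $\beta$ are continuous, $m(x,\d u)$ depends continuously on $x$, and the map $f\mapsto\int u(f\wedge f^2)\,m(\cdot,\d u)$ preserves $C_0(E)^+$ become essential, guaranteeing that $\psi^*(\cdot,u(\cdot,s))$ inherits the necessary regularity from $u(\cdot,s)$.
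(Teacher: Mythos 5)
Your proposal is correct and follows essentially the same route as the paper: time-reverse to a forward Cauchy problem, identify the mild formulation with the log-Laplace integral equation of the $(\mathcal{P},\psi^*)$-superprocess, and pass between the integral and differential forms (the paper does this last step by citing Theorem 7.11 of Li \cite{ZL11}, where you instead sketch the Duhamel/bootstrap argument explicitly). No gaps.
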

\begin{proof}
Recall that $(X,\mathbb{P}^*_\mu)$ is a $(\mathcal{P}, \psi^*)$-superprocess, and as such its law can be characterised through an integral equation. 
More precisely, for all $\mu\in \mathcal{M}( E)$ and $f\in B^+( E)$, we have
\[
\mathbb{E}^*_\mu\left[\e^{-\langle f, X_t\rangle}\right]=\exp\left\lbrace -\langle u^*_f(\cdot,t)\mu\rangle\right\rbrace,\quad t\geq 0,
\]
where $u^*_f(x,t)$ is the unique non-negative solution to the integral equation
\begin{equation}\label{inteqpsistar}
u^*_f(x,t)=\mathcal{P}_t[f](x)-\int_0^t \d s\cdot \mathcal{P}_s[\psi^*(\cdot,u^*_f(\cdot,t-s))](x),\quad x\in E,\; t\geq 0.
\end{equation}
Li (Theorem 7.11 of \cite{ZL11}) showed that this integral equation is equivalent to the following differential equation
\begin{align}
\frac{\partial}{\partial t}u^*_f(x,t)&=\mathcal{L}u^*_f(x,t)-\psi^*(x,u^*_f(x,t)),\label{diffeqpsistar}\\
u^*_f(x,0)&=f(x).\nonumber
\end{align}
Thus \eqref{diffeqpsistar} also has a unique non-negative solution. 
If for each fixed $T>0$ we define $f^T(x,t)=u_f^*(x,T-t)$, then it is not hard to see that the lemma holds.
\end{proof}

\begin{theorem}\label{htheorem}
Fix $T>0$, and take $f,h\in D_0(\mathcal{L})\cap B^+_b(E)$. 
If $f^T$ is the unique solution to \eqref{fT}, then the following differential equation has a unique non-negative solution
\begin{align}
\e^{-h^T(x,t)}w(x)\frac{\partial}{\partial t}h^T(x,t)=&\mathcal{L}\left( w(x)\e^{-h^T(x,t)}\right)\nonumber\\
&+\left(\psi^*\left(x,-w(x)\e^{-h^T(x,t)}+f^T(x,t)\right)-\psi^*(x,f^T(x,t))\right),\label{hT}\\
h^T(x,T)=&h(x)\nonumber,
\end{align}
where $\psi^*$ is given by \eqref{psistar1}, and $w$ is a martingale function that satisfies the conditions in Section \ref{2}.
\end{theorem}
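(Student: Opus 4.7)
The plan is to reduce \eqref{hT} to the standard cumulant equation for the $(\mathcal{P},\psi)$-superprocess via the substitution
\[
u^T(x,t):=f^T(x,t)+w(x)\bigl(1-\e^{-h^T(x,t)}\bigr).
\]
Heuristically, $u^T$ should play the role of $-\log\mathbb{E}_{\delta_x}[\e^{-\langle g,X_{T-t}\rangle}]$ with $g:=f+w(1-\e^{-h})$, and so ought to satisfy the standard log-Laplace PDE for $(X,\mathbb{P})$. A direct chain-rule computation, combining Lemma \ref{flemma} for $\partial_t f^T$, the shift relation $\psi^*(x,z)=\psi(x,z+w(x))-\psi(x,w(x))$, and the martingale identity $\mathcal{L}w=\psi(\cdot,w)$ (which characterises $w$ as a martingale function), shows that $h^T$ is a non-negative solution of \eqref{hT} if and only if $u^T$ solves
\[
\partial_t u^T=-\mathcal{L}u^T+\psi(x,u^T),\qquad u^T(x,T)=g(x),
\]
i.e.\ exactly the equation of Lemma \ref{flemma} with $\psi$ in place of $\psi^*$. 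Since $\psi$ is a branching mechanism of the form \eqref{local} and $g\in B^+_b(E)$, that lemma (applied verbatim with $\psi$) gives a unique non-negative $u^T$, and I take as candidate
\[
h^T(x,t):=-\log\bigl((w(x)+f^T(x,t)-u^T(x,t))/w(x)\bigr).
\]

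The main obstacle is the range control needed to make this candidate finite and non-negative, i.e.\ $f^T\leq u^T<f^T+w$. For the lower bound, $D:=u^T-f^T$ satisfies a semilinear final-value problem with terminal datum $w(1-\e^{-h})\geq 0$, whose zeroth-order nonlinearity at $D=0$ reduces, after the shift, to $\psi(f^T)+\psi(w)-\psi(f^T+w)\leq 0$. The sign follows from the pointwise super-additivity $\psi(x,z_1+z_2)\geq \psi(x,z_1)+\psi(x,z_2)$ for $z_1,z_2\geq 0$, which is immediate from \eqref{local} (the quadratic part contributes $2\beta z_1z_2\geq 0$ and the L\'evy integrand contributes $(\e^{-z_1u}-1)(\e^{-z_2u}-1)\geq 0$). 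A parabolic comparison principle then propagates $D\geq 0$ backwards from $T$. For the upper bound, set $v^T:=w+f^T-u^T$; a parallel computation using $\mathcal{L}w=\psi(\cdot,w)$ yields
\[
\partial_t v^T=-\mathcal{L}v^T+\psi(x,f^T+w)-\psi(x,f^T+w-v^T),\qquad v^T(\cdot,T)=w\e^{-h},
\]
and the constant $\tilde v\equiv w$ is a supersolution of this equation (again by super-additivity), so $v^T\leq w$. Strict positivity $v^T>0$ (equivalently $h^T<\infty$) I would obtain either from the strong maximum principle applied to this semilinear problem with the strictly positive terminal datum $w\e^{-h}$, or by interpreting $v^T$ as the log-Laplace functional of a well-defined time-inhomogeneous branching mechanism and invoking a standard positivity argument for cumulants.

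With $h^T$ now a well-defined non-negative function, verification that it actually solves \eqref{hT} is obtained by reversing the opening substitution. Uniqueness is then immediate: any non-negative solution $\tilde h^T$ of \eqref{hT} yields $\tilde u^T:=f^T+w(1-\e^{-\tilde h^T})$ solving the same standard cumulant equation, and uniqueness there forces $\tilde u^T=u^T$, hence $\tilde h^T=h^T$. I expect strict positivity of $v^T$ to be the most delicate step; the remainder is algebraic bookkeeping together with a direct appeal to the scalar cumulant theory already invoked in Lemma \ref{flemma}.
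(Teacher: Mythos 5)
Your route is genuinely different from the paper's, and the core algebra is sound. The paper proves this theorem probabilistically: it identifies $\e^{-v_{f,h}(x,t)}$ as the Laplace functional of the dressed tree $(D,Z)$ constructed in Section \ref{4}, imports existence and uniqueness of the integral equation \eqref{inteq2} from Theorem 2 of \cite{KPR14}, and then shows equivalence of the integral and differential forms by propagating the derivative at zero via the semigroup property. You instead collapse \eqref{hT} onto the classical $\psi$-cumulant equation through the substitution $u^T=f^T+w(1-\e^{-h^T})$; I have checked that this works exactly as you claim, using $\psi^*(x,u^T-w)-\psi^*(x,f^T)=\psi(x,u^T)-\psi(x,f^T+w)$ together with the identity $\mathcal{L}w=\psi(\cdot,w)$ (which is legitimate here, since $w$ is assumed to be a martingale function in the domain of $\mathcal{L}$; the paper itself uses this in \eqref{id}). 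Your substitution is precisely the function $\kappa^T$ that the paper introduces later in Section \ref{6}, so you have anticipated the structural identity underlying the whole argument. What your approach buys is self-containedness at the PDE level — no appeal to the dressed-tree construction or to \cite{KPR14} — and it makes uniqueness transparent. What the paper's approach buys is the probabilistic interpretation of $\e^{-h^T}$ as the Laplace functional of $(D,Z)$, which is what makes the proof of parts (ii) and (iii) of Theorem \ref{thm} meaningful, and it sidesteps entirely the range control that is the weak point of your argument.

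Two points in your proposal need more care than you give them. First, the comparison-principle steps ($D\geq 0$ and $v^T\leq w$) are only sketched: in the paper's generality ($\mathcal{P}$ a Feller semigroup of a diffusion killed on exiting a domain $E$, with a nonlinearity that is Lipschitz only on bounded $z$-intervals) a clean maximum principle for the semilinear final-value problem requires either a Gronwall argument on $\sup_x(-D(x,t))$ or a probabilistic representation; this is fillable but not automatic. Second, strict positivity of $v^T=w\e^{-h^T}$ is most easily obtained probabilistically rather than via a strong maximum principle: writing $u^T(x,t)=u_g(x,T-t)$ and $f^T(x,t)+w(x)=u_{f+w}(x,T-t)$ (the latter via the change of measure defining $\mathbb{P}^*$), one has $v^T>0$ as soon as $\mathbb{P}_{\delta_x}(X_s\neq 0)>0$, which holds here because $w>0$ forces survival with positive probability. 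With those two steps made rigorous, your proof is complete and correct.
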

\begin{proof} Recall the process $( D, Z)$ constructed in Section \ref{(i)existence}.
For every $\mu\in\mathcal{M}(E)$, $\nu\in\mathcal{M}_a(E)$ and $f,h\in B^+_b(E)$ we have
\[
\mathbf{E}_{(\mu,\nu)}\left[ \e^{-\langle f, D_t\rangle-\langle h, Z_t\rangle }\right]=\e^{ -\langle v_{f,h}(\cdot,t),\nu\rangle},
\]
where $\mathrm{exp}\{-v_{f,h}(x,t)\}$ is the unique $[0,1]$-valued solution to the following integral equation
\begin{equation}\label{inteq2}
\begin{split}
&w(x)\e^{-v_{f,h}(x,t)}=\mathcal{P}_t\left[ w(\cdot)\e^{-h(\cdot)}\right](x)\\
&\hspace{1cm}+\int_0^t \mathrm{d}s\cdot \mathcal{P}_{s}\left[\psi^*\left(\cdot,-w(\cdot)\e^{-v_{f,h}(\cdot,t-s)}+u^*_f(\cdot,t-s)\right)-\psi^*(\cdot,u^*_f(\cdot,t-s))\right](x),
\end{split}
\end{equation}
and $u^*_f$ is the unique non-negative solution to \eqref{inteqpsistar}.
Indeed, this claim is  a straightforward adaptation of the proof of Theorem 2 in \cite{KPR14}, the details of which we leave to the reader. Note also that a similar statement has appeared in  \cite{BKM11} in the non-spatial setting.

\medskip

Next suppose that  $f,h\in D_0(\mathcal{L})\cap B^+_b(E)$. 
We want to show that solutions to the integral equation \eqref{inteq2} are equivalent  to solutions of the following differential equation
\begin{align}
\e^{-v_{f,h}(x,t)}w(x)\frac{\partial}{\partial t}v_{f,h}(x,t)=&-\mathcal{L}\left[ w(\cdot)\e^{-v_{f,h}(\cdot,t)}\right](x)\nonumber\\
&-\left(\psi^*\left(x,-w(x)\e^{-v_{f,h}(x,t)}+u^*_f(x,t)\right)-\psi^*(x,u^*_f(x,t))\right),\label{diffeq2}\\
v_{f,h}(x,0)=&h(x).\nonumber
\end{align}
The reader will note that the statement and proof of this claim are classical. However, we include them here  for the sake of completeness. One may find similar computations in e.g. the Appendix of \cite{dynkinappendix}.

We first prove the claim that solutions to the integral equation \eqref{inteq2} are solutions to the differential equation 
\eqref{diffeq2}.
To this end consider \eqref{inteq2}. 
Note that since $\mathcal{P}$ is a Feller semigroup the right hand side is differentiable in $t$, and thus $v_{f,h}(x,t)$ is also differentiable in $t$.
To find the differential version of the equation, we can use the standard technique of propagating the derivative at zero using the semigroup property of $v_{f,h}$ and  $u^*_f$.
Indeed, on one hand the semigroup property can easily be verified using
\begin{align*}
\mathbf{E}_{(\mu,\nu)}\left[ \e^{-\langle f,\Lambda_{t+s}\rangle -\langle h,Z_{t+s}\rangle}\right]&=\mathbf{E}_{(\mu,\nu)}\left[ \mathbf{E}\left[\left. \e^{-\langle f,\Lambda_{t+s}\rangle -\langle h,Z_{t+s}\rangle}\right| \mathcal{F}_t\right]\right]\\
&=\mathbf{E}_{(\mu,\nu)}\left[ \mathbf{E}_{(\Lambda_t,Z_t)}\left[ \e^{-\langle f,\Lambda_s\rangle -\langle h,Z_s\rangle}\right]\right]\\
&=\mathbf{E}_{(\mu,\nu)}\left[\e^{-\langle u_f^*(\cdot,t),\Lambda_s \rangle-\langle v_{f,h}(\cdot,t),Z_s\rangle} \right]\\
&=\e^{-\left\langle u^*_{u^*_f(\cdot,t)}(\cdot,s),\mu \right\rangle-\left\langle v_{u^*_f(\cdot,t),v_{f,h}(\cdot,t)}(\cdot,s),\nu\right\rangle},
\end{align*}
 that is we have $v_{u^*_f(\cdot,t),v_{f,h}(\cdot,t)}(\cdot,s)=v_{f,h}(\cdot,t+s)$, and  $u^*_{u_f^*(\cdot,t)}(\cdot,s)=u_f^*(\cdot,t+s)$.
 This implies 
 \begin{equation}
\frac{\partial}{\partial t}u_f^*(x,t+)=\left.\frac{\partial}{\partial s}u_{u_f^*(\cdot,t)}(x,s)\right|_{s\downarrow 0}=\frac{\partial}{\partial s}u_{u_f^*(\cdot,t)}(x,0+),
\label{obs1}
\end{equation}
and
\begin{equation}
\frac{\partial}{\partial t}v_{f,h}(\cdot,t+)=\left.\frac{\partial}{\partial s}v_{u^*_f(\cdot,t),v_{f,h}(\cdot,t)}(x,s)\right|_{s\downarrow 0},
\label{obs2}
\end{equation}
providing that the two derivatives at zero exist from the right. One may similarly use the semigroup property, splitting at time $s$ and $t-s$ to give the left derivatives at time $t>0$.
On the other hand differentiating \eqref{inteq2} in $t$ and taking $t\downarrow 0$ gives
\begin{align}\label{diffzero}
-w(x)\e^{-v_{f,h}(x,0+)}\left.\frac{\partial}{\partial t}v_{f,h}(x,t)\right|_{t=0+}=&\mathcal{L}\left[ w(\cdot)\e^{-h(\cdot)}\right](x)\nonumber\\
&+\psi^*\left( x,-w(x)\e^{-v_{f,h}(x,0+)}+u_f^*(x,0+) \right)\\
&-\psi^*(x,u_f^*(x,0+))\nonumber,
\end{align}
which, recalling $v_{f,h}(x,0)=h(x)$ and $u_f^*(x,0)=f(x)$, can be rewritten as 
\begin{align*}
\frac{\partial}{\partial t}v_{f,h}(x,0+)=&-\frac{1}{w(x)}\e^{h(x)}\mathcal{L}\left[ w(\cdot)\e^{-h(\cdot)}\right](x)\\
&-\frac{1}{w(x)}\e^{h(x)}\psi^*\left( x,-w(x)\e^{-h(x)}+f(x)\right)+\frac{1}{w(x)}\e^{h(x)}\psi^*(x,f(x)).
\end{align*}
Hence combining the previous observations in \eqref{obs1} and \eqref{obs2}, we get
\begin{align*}
\frac{\partial}{\partial t}v_{f,h}(x,t)=&-\frac{1}{w(x)}\e^{v_{f,h}(x,t)}\mathcal{L}\left[ w(\cdot)\e^{-v_{f,h}(x,t)}\right](x)\\
&-\frac{1}{w(x)}\e^{v_{f,h}(x,t)}\psi^*\left( x,-w(x)\e^{-v_{f,h}(x,t)}+u_f^*(x,t)\right)\\
&+\frac{1}{w(x)}\e^{v_{f,h}(x,t)}\psi^*(x,u_f^*(x,t)).
\end{align*}

\medskip 

To see why the differential equation \eqref{diffeq2} implies the integral equation \eqref{inteq2} define
\[
g(x,s)=\mathcal{P}_{t-s}\left(w(x)\e^{-v_{f,h}(x,s)}\right), \quad 0\leq s\leq t.
\]
Then differentiating with respect to the time parameter gives
\begin{align*}
\frac{\partial}{\partial s}g(x,s)&=-\mathcal{P}_{t-s}w(x)\e^{v_{f,h}(x,s)}\frac{\partial}{\partial s}v_{f,h}(x,s)-\mathcal{P}_{t-s}\mathcal{L}\left(w(x)\e^{-v_{f,h}(x,s)}\right)\\
&=\mathcal{P}_{t-s}\left[\psi^*\left(x,-w(x)\e^{-v_{f,h}(x,s)}+u^*_f(x,s)\right)-\psi^*(x,u^*_f(x,s))\right],
\end{align*}
which then we can integrate over $[0,t]$ to get \eqref{inteq2}.

\medskip

To complete the proof, we fix $T>0$, and define $h^T(x,t):= v_{f,h}(x,T-t)$, and the result follows.
\end{proof}

\section{Proof of Theorem \ref{thm} (ii) and (iii)}\label{6}
The techniques we use here are similar in spirit to those in the proof of Theorem 2.1 in \cite{FFK17}, in a sense that we use stochastic calculus to show the equality \eqref{law}; however what is new in the current setting is the use of the processes \eqref{mg1} and \eqref{mg2}.

\medskip

Fix $T>0$, and let $f^T$ be the unique non-negative solution to \eqref{fT}, and $h^T$ be the unique non-negative solution to \eqref{hT}.
Define $F_t^T:= \e ^{-\langle f^T(\cdot,t),\Lambda_t\rangle-\langle h^T(\cdot,t),Z_t\rangle}$, $t\leq T$.
Using stochastic calculus, we first verify that our choice of $f^T$ and $h^T$ results in the process $F_t^T$, $t\leq T$, having constant expectation on $[0,T]$.
In the definition of $F^T$ both $\langle f^T(\cdot,t),\Lambda_t\rangle$ and $\langle h^T(\cdot,t),Z_t\rangle$ are semi-martingales, thus we can use It\^{o}'s formula (see e.g. Theorem 32 in \cite{P90}) to get 
\begin{equation*}
\begin{split}
\d F_t^T=&- F_{t-}^T\d\Lambda_t^{f^T}- F_{t-}^T\d Z_t^{h^T}+\frac{1}{2} F_{t-}^T\d\left[\Lambda^{f^T},\Lambda^{f^T}\right]_t^c+\frac{1}{2} F_{t-}^T\d\left[Z^{h^T},Z^{h^T}\right]_t^c\\
&+ F_{t-}^T\d\left[\Lambda^{f^T},Z^{h^T}\right]_t^c+\Delta F_t^T+ F_{t-}^T\Delta\Lambda_t^{f^T}+ F_{t-}^T\Delta Z_t^{h^T},\quad 0\leq t\leq T,
\end{split}
\end{equation*}
where $\Delta \Lambda_t^{f^T}=\langle f^T(\cdot,t),\Lambda_t-\Lambda_{t-}\rangle$, and to avoid heavy notation we have written $\Lambda_t^{f^T}$ instead of $\langle f^T(\cdot,t),\Lambda_t\rangle$, and $Z_t^{h^T}$ instead of $\langle h^T(\cdot,t),Z_t\rangle$.
Note that without the movement $Z$ is a pure jump process, and since the interaction between $\Lambda$ and $Z$ is limited to the time of the immigration events, we have that $\left[\Lambda^{f^T},Z^{h^T}\right]_t^c=0$.
Taking advantage of
\[
F_t^T=F_{t-}^T\e^{-\Delta\Lambda_t^{f^T}-\Delta Z_t^{h^T}},
\]
we may thus write in integral form
\begin{equation*}
\begin{split}
F_t^T&=F_0^T-\int_0^t F_{s-}^T\d\Lambda^{f^T}_s-\int_0^t F_{s-}^T\d Z^{h^T}_s+\int_0^t F_{s-}^T\langle \beta(\cdot) (f^T(\cdot,s))^2,\Lambda_{s-}\rangle \d s\\
&+\int_0^t F_{s-}^T\langle(\nabla h^T(\cdot,s))^{\tt T} a\nabla h^T(\cdot,s),Z_{s-}\rangle \d s
+\sum_{s\leq t}\left\lbrace \Delta F_s^T+F_{s-}^T\Delta\Lambda_s^{f^T}+ F_{s-}^T\Delta Z_s^{h^T}\right\rbrace.
\end{split}
\end{equation*}
To simplify the notation we used that both $f^T(x,t)$ and $h^T(x,t)$ are continuous in $t$, thus $f^T(x,t)=f^T(x,t-)$ and $h^T(x,t)=h^T(x,t-)$.

We can split up the last term, that is the sum of discontinuities according to the optional random measure in \eqref{coupledSDE1} responsible for this discontinuity.
Thus, writing $\Delta^{(i)},\;i=0,1,2$, to mean an increment coming from each of the three random measures,
\begin{equation*}
\begin{split}
F_t^T=&F_0^T-\int_0^t F_{s-}^T\d\Lambda^{f^T}_s-\int_0^t F_{s-}^T\d Z^{h^T}_s+\int_0^t F_{s-}^T\langle \beta(\cdot) (f^T(\cdot,s))^2,\Lambda_{s-}\rangle \d s\\
&+\int_0^t F_{s-}^T\langle(\nabla h^T(\cdot,s))^{\tt T} a\nabla h^T(\cdot,s),Z_{s-}\rangle \d s+\sum_{s\leq t}F_{s-}^T\left\lbrace \e^{-\Delta^{(0)}\Lambda_s^{f^T}}-1+\Delta^{(0)}\Lambda_s^{f^T}\right\rbrace\\
&+\sum_{s\leq t}F_{s-}^T\left\lbrace \e^{-\Delta^{(1)}\Lambda_s^{f^T}}-1+\Delta^{(1)}\Lambda_s^{f^T}\right\rbrace\\
&+\sum_{s\leq t}F_{s-}^T\left\lbrace  \e^{-\Delta^{(2)}\Lambda_s^{f^T}-\Delta Z_s^{h^T}}-1+\Delta^{(2)}\Lambda_s^{f^T}+ \Delta Z_s^{h^T}\right\rbrace.
\end{split}
\end{equation*}
Next, plugging in $\d\Lambda_s^{f^T}$ and $\d Z_s^{h^T}$ gives
\begin{equation}\label{FtT}
\begin{split}
F_t^T=&F_0^T+\int_0^t F_{s-}^T\langle \psi'(\cdot,w(\cdot))f^T(\cdot,s),\Lambda_{s-}\rangle \d s+\int_0^t F_{s-}^T\langle \beta(\cdot)(f^T(\cdot,s))^2,\Lambda_{s-}\rangle\d s\\
&-\eta\int_0^t F_{s-}^T\langle \mathcal{L}f^T(\cdot,s),\Lambda_{s-}\rangle\d s
-\int_0^t F_{s-}^T\left\langle \frac{\partial}{\partial s}f^T(\cdot,s),\Lambda_{s-}\right\rangle\d s\\
&-\int_0^t F_{s-}^T\left\langle \frac{\partial}{\partial s}h^T(\cdot,s),Z_{s-}\right\rangle\d s-\int_0^t F_{s-}^T\langle \mathcal{L}^w h^T(\cdot,s),Z_{s-}\rangle\d s\\
 &-\int_0^t F_{s-}^T \langle 2\beta(\cdot) f^T(\cdot,s),Z_{s-}\rangle \d s+\sum_{s\leq t}F_{s-}^T\left\lbrace \e^{-\Delta^{(0)}\Lambda_s^f}-1+\Delta^{(0)}\Lambda_s^{f^T}\right\rbrace\\
 &+\sum_{s\leq t}F_{s-}^T\left\lbrace \e^{-\Delta^{(1)}\Lambda_s^{f^T}}-1\right\rbrace
 +\sum_{s\leq t}F_{s-}^T\left\lbrace  \e^{-\Delta^{(2)}\Lambda_s^{f^T}-\Delta Z_s^{h^T}}-1\right\rbrace\\
 &+\int_0^t F_{s-}^T\langle (\nabla h^T(\cdot,s))^{\tt T} a\nabla h^T(\cdot,s),Z_{s-}\rangle \d s+M_t^{loc},
\end{split}
\end{equation}
where $M_t^{loc}$ is a local martingale corresponding to the terms $U_t^c(f^T)$, $V_t^c(h^T)$ and the integral with respect to the random measure $\tilde{{\texttt N}}^{0}$ in \eqref{coupledSDE1}.
Note that the two terms with the time-derivative are due to the extra time dependence of the test-functions in the integrals $\langle f^T(\cdot,s),\Lambda_s\rangle$ and $\langle h^T(\cdot,s),Z_s\rangle$. 
In particular a change in $\langle f^T(s,\cdot),\Lambda_s\rangle$ corresponds to either a change in $\Lambda_s$ or a change in $f^T(\cdot,s)$.

Next we show that the local martingale term is in fact a real martingale, which will then disappear when we take expectations.
First note that due to the boundedness of the drift and diffusion coefficients of the branching mechanism, and the conditions we had on its L\'evy measure, the branching of the superprocess can be stochastically dominated by a finite mean CSBP.
This means that the CSBP associated to the Esscher-transformed branching mechanism $\psi^*$ is almost surely finite on any finite time interval $[0,T]$, and thus the function $f^T$ is bounded on $[0,T]$.
Using the boundedness of $f^T$ and the drift coefficient $\beta$, the quadratic variation of the integral 
\begin{equation}\label{intU}
\int_0^tF_{s-}^T \d U_s^c(f^T)
\end{equation}
can be bounded from above as follows
\begin{align*}
\int_0^t 2 F_{s-}^T \langle \beta(\cdot) (f^T(\cdot,s))^2,\Lambda_{s-}\rangle \d s &\leq \int_0^t \e^{-\langle f^T(\cdot,s),\Lambda_{s-}\rangle} \langle C (f^T(\cdot,s))^2,\Lambda_{s-}\rangle \d s\\
&\leq \int_0^t \e^{-\widetilde{C}||\Lambda_{s-}||}\widehat{C}||\Lambda_{s-}||\d s,
\end{align*}
where $C,\widehat{C}$ and $\widetilde{C}$ are finite constants.
Since the function $x\mapsto \e^{-\widetilde{C}x}x$ is bounded on $[0,\infty)$, the previous quadratic variation is finite, and so the process \eqref{intU} is a martingale on $[0,T]$.

To show the martingale nature of the stochastic integral 
\begin{equation}\label{intV}
\int_0^t F_{s-}\d V_s^c(h^T)
\end{equation}
we note that due to construction, $h^T\in \mathcal{D}_0(\mathcal{L})$, and is bounded on $[0,T]$. 
Thus, $V_t^c(h^T)$ is in fact a martingale on $[0,T]$, and since $F_{s-}\leq 1$, $s\in[0,T]$, the quadratic variation of \eqref{intV} is also finite, which gives the martingale nature of \eqref{intV} on $[0,T]$.

Finally, we consider the integral
\begin{equation}\label{intN0}
\int_0^t\int_{E}\int_{(0,\infty)} F^T_{s-} \langle f^T(\cdot,s),u\delta_x\rangle \tilde{N}^0(\d s, \d x, \d u). 
\end{equation}
Note that  for compactly supported $\mu\in\mathcal{M}(E)^\circ$
\begin{align*}
Q_t&:= \mathbb{E}_\mu\left[\int_0^t\int_{E}\int_{(0,\infty)} \left(F^T_{s-}\langle f^T(\cdot,s),u\delta \rangle\right)^2\hat{N}^0(\d s,\d x, \d u)\right]\\
&= \mathbb{E}_\mu\left[\int_0^t \int_E\int_{(0,\infty)}\left(F^T_{s-} u f^T(x,s)\right)^2 \e^{-w(x)u}m(x,\d u) \Lambda_{s-}(\d x)\d s\right]\\
&\leq  \mathbb{E}_\mu\left[\int_0^t \e^{-2C ||\Lambda_{s-}||}C\left\langle \int_{(0,\infty)}u^2\e^{-w(x)u}m(x,\d u),\Lambda_{s-}\right\rangle \d s\right]\\
&\leq  \mathbb{E}_\mu\left[\int_0^t \e^{-\widetilde{C}||\Lambda_{s-}||} \widehat{C}||\Lambda_{s-}||\d s\right]\\
&\leq C' t
\end{align*}
where $C, \widetilde{C},\widehat{C}$ and $C'$ are finite constants.
Thus $Q_t<\infty$ on $[0,T]$, and we can refer to page 63 of \cite{IW89} to conclude that the process \eqref{intN0} is indeed a martingale on $[0,T]$.

Thus, after taking expectations and gathering terms in \eqref{FtT}, we get
\begin{align}\label{EFT}
\mathbf{E}_\mu\left[F_t^T\right]=\mathbf{E}_\mu\left[F_0^T\right]&+\int_0^t \mathbf{E}_\mu\left[F_{s-}^T\langle A(\cdot,f^T(\cdot,s)),\Lambda_{s-}\rangle\right]\d s\nonumber\\
&-\int_0^t \mathbf{E}_\mu\left[F_{s-}^T\left\langle \frac{\partial}{\partial s}f^T(\cdot,s),\Lambda_{s-}\right\rangle\right]\d s \nonumber\\
&+\int_0^t \mathbf{E}_\mu[F_{s-}^T\langle B(\cdot,h^T(\cdot,s),f^T(\cdot,s)),Z_{s-}\rangle]\d s\\
&-\int_0^t \mathbf{E}_\mu\left[F_{s-}^t\left\langle \frac{\partial}{\partial s}h^T(\cdot,s),Z_{s-}\right\rangle\right]\d s,\quad 0\leq t\leq T, \nonumber
\end{align}
where 
\begin{align}\label{A}
A(x,f)&=\psi'(x,w(x))f+\beta(x)f^2-\mathcal{L}f+\int_{(0,\infty)}\left(\e^{-u f}-1+u f\right)\e^{-w(x)u}m(x,\d u)\\
&=-\mathcal{L}f+\psi^*(x,f)\nonumber,
\end{align}
and 
\begin{align}\label{B}
B(x,h,f)&=(\nabla h)^{\tt T}a\nabla h-\mathcal{L}^w h-2\beta(x) f+\int_{(0,\infty)}(\e^{- u f}-1) u\e^{-w(x) u}m(x,\d u)\nonumber\\
&+\sum_{k=2}^\infty \int_{(0,\infty)}\left(\e^{- u f-(k-1)h}-1\right)\frac{1}{w(x)}
\Bigg\{ \beta(x) w^2(x)\delta_0(\d u)\mathbf{1}_{\{ k=2\}}\\
&\hspace{7cm}+w^k(x)\frac{u^k}{k!}\e^{-w(x)u}m(x,\d u)\Bigg\}.\nonumber
\end{align}
We can see immediately that $A(x,f^T(x,t))$ is exactly what we have on the right-hand side of \eqref{fT}.
Furthermore, using that
\begin{equation}\label{id}
(\nabla h)^{\tt T}a\nabla h-\mathcal{L}^w h=\e^h\frac{1}{w}\mathcal{L}\left( w \e^{-h}\right)-\frac{1}{w}\psi(\cdot,w),
\end{equation}
we can also verify that 
\[
B(x,h,f)=\e^{h}\frac{1}{w}\mathcal{L}\left( w\e^{-h}\right)+\e^{h}\frac{1}{w}\left(\psi^*\left( x,-w(x)\e^{-h}+f\right) -\psi^*(x,f)\right),
\]
that is, $B(x,h^T(x,t),f^T(x,t))$ equals to the right-hand side of \eqref{hT}.
Hence, recalling the defining equations of $f^T$ \eqref{fT} and $h^T$ \eqref{hT}, we get that the last four terms of \eqref{EFT} cancel, and thus $\mathbf{E}_\mu[F_t^T]=\mathbf{E}_\mu[F_0^T]$ for $t\in[0,T]$, as required.
In particular, using the boundary conditions for $f^T$ and $h^T$, we get that
\begin{align}\label{lawF}
\mathbf{E}_\mu\left[F_T^T\right]=\mathbf{E}_\mu\left[ \e^{-\langle f,\Lambda_T\rangle-\langle h,Z_T\rangle}\right]
=\mathbf{E}_\mu\left[ \e^{-\langle f^T(\cdot,0),\Lambda_0\rangle -\langle h^T(\cdot,0),Z_0\rangle}\right]=
\mathbf{E}_\mu\left[F_0^T\right].
\end{align}
Note that by construction we can relate the right-hand side of this previous expression to the superprocess.
In particular, using the Poissonian nature of $Z_0$, and that $X_0=\Lambda_0=\mu$ is deterministic we have 
\begin{equation}\label{law0}
\mathbf{E}_\mu\left[\e^{-\langle f^T(\cdot,0),\Lambda_0\rangle-\langle h^T(\cdot,0),Z_0\rangle}\right]=\mathbb{E}_\mu\left[ \e^{-\left\langle f^T(\cdot,0)+w(\cdot) \left(1-\e^{-h^T(\cdot,0)}\right),X_0\right\rangle}\right],
\end{equation}
where $X_t$ is a solution to \eqref{thinning}.
Thus, by choosing the right test-functions, we could equate the value of $F_t^T$ at $T$ to its initial value, which in turn gave a connection with the superprocess.
The next step is to show that the process
\[
\e^{-\left\langle f^T(\cdot,t)+w(\cdot)\left( 1-\e^{-h^T(\cdot,t)}\right),X_t\right\rangle}, \quad t\in[0,T],
\]
has constant expectation on $[0,T]$,
which would then allow us to deduce
\[
\mathbf{E}_\mu\left[ \e^{-\langle f,\Lambda_T\rangle-\langle h ,Z_T\rangle}\right]=\mathbb{E}_\mu\left[ \e^{-\left\langle f +w  \left(1-\e^{-h }\right),X_T\right\rangle}\right].
\]

To simplify the notation let $\kappa^T(x,t):=f^T(x,t)+w(x)\left( 1-\e^{-h^T(x,t)}\right)$, and define $G_t^T:=\e^{-\langle \kappa^T(\cdot,t),X_t\rangle}$.
%
As the argument here is the exact copy of the previous analysis, we only give the main steps of the calculus, and leave it to the reader to fill in the gaps.

Since $\langle \kappa^T(\cdot,t),X_t\rangle$, $t\leq T$, is a semi-martingale, we can use It\^{o}'s formula to get
\begin{equation}\label{GtT}
\begin{split}
G_t^T=G_0^T&+\int_0^t G_{s-}^T\left\langle \psi'(\cdot,w(\cdot))\kappa^T(s,\cdot),X_{s-}\right\rangle \d s+\int_0^t G_{s-}^T\langle \beta(\cdot)(\kappa^T(\cdot,s))^2,X_{s-}\rangle\d s\\
&-\int_0^t  G_{s-}^T\langle 2\beta(\cdot) w(\cdot) \kappa^T(\cdot,s),X_{s-}\rangle \d s
-\int_0^t G_{s-}^T\langle \mathcal{L}\kappa^T(\cdot,s),X_{s-}\rangle \d s\\
&+\int_0^t G_{s-}^T\left\langle \int_0^\infty \left( \e^{-u\kappa^T(\cdot,s)}-1+u\kappa^T(\cdot,s)\right)\e^{-w(\cdot)u}m(\cdot,\d u),X_{s-}\right\rangle\d s\\
&+\int_0^t G_{s-}^t\left\langle \int_0^\infty \left( \e^{-u\kappa^T(\cdot,s)}-1\right)w(\cdot)u\e^{-w(\cdot)u}m(\cdot,\d u),X_{s-}\right\rangle\d s\\
&+\int_0^t G_{s-}^T\left\langle \int_0^\infty \left( \e^{-u \kappa^T(\cdot,s)}-1\right)\sum_{k=2}^\infty \frac{(w(\cdot)u)^k}{k!}\e^{-w(\cdot) u}m(\cdot,\d u),X_{s-}\right\rangle\d s\\
&-\int_0^t G_{s-}^T\left\langle \frac{\partial}{\partial s}\kappa^T(\cdot,s),X_{s-}\right\rangle \d s+M_t^{loc}.
\end{split}
\end{equation}
where $M_t^{loc}$ is a local martingale corresponding to the term $M_t^c(f)$, and the integral with respect to the random measure $\tilde{N}^{0}$ in \eqref{thinning}.
Note that the reasoning that led to the martingale nature of the local martingale term of \eqref{FtT} can also be applied here, which gives that $M_t^{loc}$ in \eqref{GtT} is in fact a true martingale on $[0,T]$, which we denote by $M_t$.

Next we plug in $\kappa^T$, and after some laborious amount of algebra get
\begingroup
\allowdisplaybreaks
\begin{align*}
G_t^T=&G_0^T+\int_0^t G_{s-}^T\langle \psi'(\cdot,w(\cdot))f^T(\cdot,s),X_{s-}\rangle\d s+\int_0^t G_{s-}^T\langle \beta(\cdot)(f^T(\cdot,s))^2,X_{s-}\rangle\d s\\
&-\int_0^t G_{s-}^T\langle \mathcal{L}f^T(\cdot,s),X_{s-}\rangle\d s\\
&+\int_0^t G_{s-}^T\left\langle \int_{(0,\infty)}(\e^{-u f^T(\cdot,s)}-1+u f^T(\cdot,s))\e^{-w(\cdot)u}m(\cdot,\d u),X_{s-} \right\rangle\d s\\
&-\int_0^t G_{s-}^T\langle 2\beta(\cdot) f^T(\cdot,s)\e^{-h^T(\cdot,s)}w(\cdot),X_{s-} \rangle\d s\\
&+\int_0^t G_{s-}^T\left\langle\int_{(0,\infty)}(\e^{- u f^T(\cdot,s)}-1) u\e^{-w(\cdot) u}m(\cdot,\d u)\e^{-h^T(\cdot,s)}w(\cdot),X_{s-}\right\rangle \d s\\
&+\int_0^t G_{s-}^T\left\langle \sum_{k=2}^\infty \int_{(0,\infty)}(\e^{- u f^T(\cdot,s)-(k-1)h^T(\cdot,s)}-1)\frac{1}{w(\cdot)}\right.\\
&\hspace{1cm} \left.\left\lbrace \beta(\cdot) w^2(\cdot)\delta_0(\d u)\mathbf{1}_{\{ k=2\}}+w^k(\cdot)\frac{u^k}{k!}\e^{-w(\cdot)u}m(\cdot,\d u)\right\rbrace \e^{-h^T(\cdot)}w(\cdot), X_{s-}\right\rangle\d s\\
&+\int_0^t G_{s-}^T\left\langle (1-\e^{-h^T(\cdot,s)})\psi(\cdot,w(\cdot))-\mathcal{L}w(\cdot)(1-\e^{-h^T(\cdot,s)}), X_{s-}\right\rangle \d s\\
&-\int_0^t G_{s-}^T\left\langle \frac{\partial}{\partial s}\kappa^T(\cdot,s),X_{s-}\right\rangle \d s+M_t.
\end{align*}
\endgroup
Using once again the identity \eqref{id}, and taking expectations give
\begin{align}\label{EGT}
\mathbb{E}_{\mu}[G_t^T]=\mathbb{E}_{\mu}[G_0^T]&+\int_0^t \mathbb{E}_{\mu}[G_{s-}^T\langle A(\cdot,f^T(\cdot,s)),X_{s-}\rangle]\d s\\
&+\int_0^t \mathbb{E}_{\mu}[G_{s-}^T\langle \e^{-h^T(\cdot,s)}w(\cdot) B(\cdot,h^T(\cdot,s),f^T(\cdot,s), X_{s-}\rangle]\d s \nonumber\\
&-\int_0^t \mathbb{E}_{\mu}\left[G_{s-}^T\left\langle \frac{\partial}{\partial s}\kappa^t(s,\cdot),X_{s-}\right\rangle\right] \d s,\nonumber
\end{align}
where $A$ and $B$ are given by \eqref{A} and \eqref{B}.
Finally, noting
\begin{equation*}
\frac{\partial}{\partial s}\kappa^T(x,s)=\frac{\partial}{\partial s}f^T(x,s)+w(x)\e^{-h^T(x,s)}\frac{\partial}{\partial s}h^T(x,s),
\end{equation*}
gives
\[
\frac{\partial}{\partial s}\kappa^T(s,x)=-A(x,f^T(x,s))-w(x)\e^{-h^T(x,s)}B(x,h^T(x,s),f^T(x,s)),
\]
which results in the cancellation of the last three terms in \eqref{EGT}, and hence verifies the constant expectation of $G_t^T$ on $[0,T]$.
In particular, we have proved that
\begin{equation}\label{lawG}
\begin{split}
\mathbb{E}_\mu[G_T^T]&=\mathbb{E}_\mu\left[ \e^{-\left\langle f+w\left(1-\e^{-h}\right),X_T\right\rangle}\right]\\
&=\mathbb{E}_\mu\left[ \e^{-\left\langle f^T(\cdot,0)+w\left(1-\e^{-h^T(\cdot,0)}\right),X_0\right\rangle}\right]=\mathbb{E}_\mu[G_0^T].
\end{split}
\end{equation}
In conclusion, combining the previous observations \eqref{lawF} and \eqref{law0} with \eqref{lawG} gives
\[
\mathbf{E}_\mu\left[ \e^{-\langle f,\Lambda_T\rangle-\langle h,Z_T\rangle}\right]=\mathbb{E}_\mu\left[ \e^{-\left\langle f+w \left(1-\e^{-h}\right),X_T\right\rangle}\right].
\]
Since $T>0$ was arbitrary the above equality holds for any time $T>0$.

Then we have the following implications.
First, by setting $h=0$ we find that
\[
\mathbf{E}_\mu\left[ \e^{-\langle f,\Lambda_T\rangle}\right]=\mathbb{E}_\mu\left[ \e^{-\left\langle f,X_T\right\rangle}\right],
\]
which not only shows that under our conditions $(\Lambda_t,t\geq 0)$ is Markovian, but also that its semigroup is equal to the semigroup of $(X,\mathbb{P}_\mu)$, and hence proves that $(\Lambda_t,t\geq 0)$ is indeed a weak solution to \eqref{sdeloc}.

Furthermore, choosing $h$ and $f$ not identical to zero, we get that the pair $(\Lambda_t,Z_t)$ under $\mathbf{P}_{\mu}$ has the same law as $(X_t,\mathrm{Po}(w(x)X_t(\d x)))$ under $\mathbb{P}_\mu$, where $\mathrm{Po}(w(x)X_t(\d x))$ is an autonomously independent Poisson random measure with intensity $w(x)X_t(\d x)$, thus $Z_t$ given $\Lambda_t$ is indeed a Poisson random measure with intensity $w(x)\Lambda_t(\d x)$.\hfill$\square$

\section{Proof of Theorem \ref{thm} (i): uniqueness} If we review the calculations that lead to \eqref{lawF}, we observe that any solution $(\Lambda, Z)$ to the coupled SDE \eqref{coupledSDE1} has the property that, for $\mu\in\mathcal{M}(E)$ and $\nu\in\mathcal{M}_a(E)$, 
\[
\mathbf{E}_{(\mu, \nu)} \left[F_T^T\right]=\mathbf{E}_{(\mu, \nu)}\left[ \e^{-\langle f,\Lambda_T\rangle-\langle h,Z_T\rangle}\right]
=\e^{-\langle f^T(\cdot,0),\mu\rangle -\langle h^T(\cdot,0),\nu\rangle}.
\]
Hence, since any two solutions to \eqref{coupledSDE1} are Markovian, the second equality above identifies their transitions as equal thanks to the uniqueness of the PDEs in Lemma \ref{flemma} and Theorem \ref{htheorem}. In other words, there is a unique weak solution to \eqref{coupledSDE1}.











\acks

AEK  acknowledges support from EPSRC grant  EP/L002442/1. DF is supported by a scholarship from the EPSRC Centre for Doctoral Training, SAMBa. JF acknowledges support from Basal-Conicyt Centre for Mathematical Modelling AFB 170001 and Millenium Nucleus  SMCDS. Part of this work was carried out whilst AEK was visiting the Centre for Mathematical Modelling, Universidad de Chile and JF was visiting the Department of Mathematical Sciences at the University of Bath, each is grateful to the host institution of the other for their support.  The authors are additionally grateful to two anonymous referees and an AE for their careful reading and assessment of an earlier version of this paper, which led to significant improvements. 

%
%
%
%


\end{document}